\theoremstyle{plain}
\newtheorem{lemma}{Lemma}
\newtheorem{theorem}{Theorem}
\numberwithin{equation}{section}
\begin{document}
\title[Concentration of order statistics]{Simultaneous concentration of
order statistics}
\author{Daniel Fresen}
\address{Department of Mathematics, University of Missouri}
\email{djfb6b@mail.missouri.edu}
\subjclass[2000]{Primary 62G30; Secondary 60G55}
\keywords{Glivenko-Cantelli theorem, order statistics, log-concave, Lipschitz%
}
\dedicatory{To my parents John Fresen and Jill Fresen}
\thanks{I am grateful to John Fresen and Jill Fresen for my education and
for many interesting mathematical discussions throughout the years. \ Many
thanks to Joel Zinn as well as my advisors Alexander Koldobsky and Mark
Rudelson for their comments and suggestions.}
\date{January 2011}

\begin{abstract}
Let $\mu $ be a probability measure on $\mathbb{R}$ with cumulative
distribution function $F$, $(x_{i})_{1}^{n}$ a large i.i.d. sample from $\mu 
$, and $F_{n}$ the associated empirical distribution function.\ The
Glivenko-Cantelli theorem states that with probability 1, $F_{n}$ converges
uniformly to $F$. \ In so doing it describes the macroscopic structure of $%
\{x_{i}\}_{1}^{n}$, however it is insensitive to the position of individual
points. \ Indeed any subset of $o(n)$ points can be perturbed at will
without disturbing the convergence.

We provide several refinements of the Glivenko-Cantelli theorem which are
sensitive not only to the global structure of the sample but also to
individual points. \ Our main result provides conditions that guarantee
simultaneous concentration of all order statistics. \ The example of main
interest is the normal distribution.
\end{abstract}

\maketitle

\section{Introduction}

Let $\mu $ be a probability measure on $\mathbb{R}$ with cumulative
distribution function $F$ and let $(x_{i})_{1}^{\infty }$ denote an i.i.d.
sequence of random variables with distribution $\mu $.\ For each $n\in 
\mathbb{N}$ let $F_{n}$ denote the empirical cumulative distribution function%
\[
F_{n}(t)=\frac{1}{n}|\{i\in \mathbb{N}:i\leq n\text{, }x_{i}\leq t\}|
\]%
where $|A|$ denotes the cardinality of a set $A$.\ The Glivenko-Cantelli
theorem (see e.g. \cite{Du}) states that with probability 1,%
\[
\lim_{n\rightarrow \infty }\sup_{t\in \mathbb{R}}|F(t)-F_{n}(t)|=0
\]%
The Dvoretzky-Kiefer-Wolfowitz inequality (\cite{DKW} and \cite{Ma})
provides a quantitative formulation of this and states that for all $n\in 
\mathbb{N}$ and all $\lambda >0$, with probability at least $1-2\exp
(-2\lambda ^{2})$,%
\[
\sup_{t\in \mathbb{R}}\sqrt{n}|F(t)-F_{n}(t)|\leq \lambda
\]%
This titanic theorem would be well deserving of the name '\textit{the
fundamental theorem of statistics}' as it is the theoretical foundation
behind the idea that a large independent sample is representative of the
population. There is, however, a certain crudeness in\ this noble theorem.
Asymptotically, individual points play a negligible role and we learn very
little about the finer structure of the sample $\{x_{i}\}_{1}^{n}$.\ For
instance, it gives us almost no information about either the maximum or the
minimum. We could take any subset of $o(n)$ points and perturb them as we
please without affecting the convergence.

Donsker's theorem (see e.g. \cite{Do}, \cite{KMT} and \cite{MZ}) gives more
insight into the structure of the sample. \ Consider the stochastic process $%
X_{n}$ defined on $\mathbb{R}$ by \ 
\[
X_{n}(t)=\sqrt{n}(F_{n}(t)-F(t))
\]%
Provided that $F$ is strictly increasing and continuous, $X_{n}$ converges
to a re-scaled Brownian bridge (more precisely, $X_{n}\circ F^{-1}$
converges to a Brownian bridge on $[0,1]$). \ However Donsker's theorem is
plagued by a similar insensitivity to the cries of the minority. \ Through
the eyes of Donsker's theorem, we can 'see' subsets as small as $\sqrt{n}$
but are blind to anything smaller such as subsets of size $\log (n)$.

In this paper we provide refined forms of the Glivenko-Cantelli theorem
which, under certain conditions, guarantee tight control over all or most
points in the sample, not only individually but \textit{simultaneously}. \
Super-exponential decay of the distribution provides simultaneous
concentration of \textit{all} order statistics (see theorem 1) while
exponential decay provides simultaneous concentration of \textit{most} order
statistics and slightly weaker control over the rest (see theorems 2 and 3).
\ We provide quantitative bounds for log-concave distributions (see theorem
4).

Our results extend the Gnedenko law of large numbers, which guarantees
concentration of $\max \{x_{i}\}_{1}^{n}$. \ They may be compared to the
results in \cite{Fr} where the Gnedenko law of large numbers is extended to
the multi-dimensional setting, to the paper \cite{GLSW} that provides
estimates of order statistics in terms of Orlicz functions and to the
article \cite{AKT} that concerns optimal matchings of random points
uniformly distributed within the unit square. We refer the reader to \cite%
{GS} and \cite{SW} for an extensive treatment of empirical process theory
and to \cite{BC}, \cite{Da} and \cite{SG} for information on order
statistics. Interesting papers on the Glivenko-Cantelli theorem include \cite%
{Deh}, \cite{Ta}, \cite{Ta2} and \cite{We}.

\begin{theorem}
Let $\mu $ be any probability measure on $\mathbb{R}$ with a continuous
strictly increasing cumulative distribution function $F$ such that for all $%
\varepsilon >0$%
\begin{equation}
\lim_{t\rightarrow \infty }\frac{1-F(t+\varepsilon )}{1-F(t)}%
=\lim_{t\rightarrow -\infty }\frac{F(t)}{F(t+\varepsilon )}=0  \label{h}
\end{equation}%
Then there exists a sequence $(\delta _{n})_{1}^{\infty }$ with $%
\lim_{n\rightarrow \infty }\delta _{n}=0$ such that for all $n\in \mathbb{N}$%
, if $(x_{i})_{1}^{n}$ is an i.i.d. sample from $\mu $ with corresponding
order statistics $(x_{(i)})_{1}^{n}$, then with probability at least $%
1-\delta _{n}$,%
\begin{equation}
\sup_{1\leq i\leq n}|x_{(i)}-x_{(i)}^{\ast }|\leq \delta _{n}  \label{k}
\end{equation}%
where $x_{(i)}^{\ast }=F^{-1}(i/(n+1))$.
\end{theorem}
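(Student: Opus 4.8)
The plan is to deduce the theorem from the following fixed‑$\varepsilon$ estimate:
\[
\text{for every }\varepsilon>0,\qquad \lim_{n\to\infty}\mathbb{P}\Bigl(\sup_{1\le i\le n}|x_{(i)}-x_{(i)}^{\ast}|>\varepsilon\Bigr)=0 .
\]
Given this, $(\delta_{n})$ is obtained by a routine extraction: pick $n_{1}<n_{2}<\cdots$ so that the probability above with $\varepsilon=1/k$ is $\le 1/k$ for every $n\ge n_{k}$, and put $\delta_{n}=1/k$ for $n_{k}\le n<n_{k+1}$ (with $\delta_{n}$ any large enough finite constant for $n<n_{1}$, legitimate because the left side of \eqref{k} is a.s.\ finite). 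So fix $\varepsilon>0$. Since $F$ is continuous and strictly increasing, $F(x_{(i)}^{\ast})=p_{i}:=i/(n+1)$ and $0<F<1$ on $\mathbb{R}$. By the union bound it suffices to bound, for each $i$, the probabilities of $\{x_{(i)}>x_{(i)}^{\ast}+\varepsilon\}$ and $\{x_{(i)}<x_{(i)}^{\ast}-\varepsilon\}$. With $N_{i}=|\{j\le n:x_{j}\le x_{(i)}^{\ast}+\varepsilon\}|$ and $L_{i}=|\{j\le n:x_{j}<x_{(i)}^{\ast}-\varepsilon\}|$, these events are exactly $\{N_{i}\le i-1\}$ and $\{L_{i}\ge i\}$, and $N_{i}\sim\mathrm{Bin}(n,F(x_{(i)}^{\ast}+\varepsilon))$, $L_{i}\sim\mathrm{Bin}(n,F(x_{(i)}^{\ast}-\varepsilon))$ (using that $\mu$ is atomless). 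The two success probabilities are $F(F^{-1}(p_{i})\pm\varepsilon)$, and the quantities $F(F^{-1}(p)\pm\varepsilon)/F(F^{-1}(p))$ and $(1-F(F^{-1}(p)\pm\varepsilon))/(1-F(F^{-1}(p)))$ are exactly those that \eqref{h} (after the harmless shift $t\mapsto t\pm\varepsilon$) forces to $0$ as $p\to0^{+}$ or $p\to1^{-}$.

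Using \eqref{h}, fix $\beta=\beta(\varepsilon)\in(0,\tfrac12)$ small enough that all four of the ratios above are $<1/(4e)$ whenever their argument lies left of the $\beta$‑quantile $F^{-1}(\beta)$ (for the two ratios controlling $p\to0^{+}$) or right of the $(1-\beta)$‑quantile $F^{-1}(1-\beta)$ (for the two controlling $p\to1^{-}$). Partition $\{1,\dots,n\}$ into the core $C_{n}=\{i:\beta\le p_{i}\le1-\beta\}$ and the blocks $T_{n}^{-}=\{i:p_{i}<\beta\}$, $T_{n}^{+}=\{i:p_{i}>1-\beta\}$. On $C_{n}$, $x_{(i)}^{\ast}$ stays in the fixed compact set $[F^{-1}(\beta),F^{-1}(1-\beta)]$, so $F(x_{(i)}^{\ast}+\varepsilon)-p_{i}$ and $p_{i}-F(x_{(i)}^{\ast}-\varepsilon)$ both exceed a constant $\eta=\eta(\varepsilon,\beta)>0$; hence the targets $i-1,i$ lie at distance $\ge n\eta$ from the binomial means $nF(x_{(i)}^{\ast}\pm\varepsilon)$, and a Chernoff bound gives $\mathbb{P}(N_{i}\le i-1),\mathbb{P}(L_{i}\ge i)\le e^{-cn}$ with $c=c(\eta)>0$. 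Summing over the $\le n$ core indices contributes $\le 2ne^{-cn}\to0$.

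For the blocks, parametrize by $k=n+1-i$ on $T_{n}^{+}$ and $k=i$ on $T_{n}^{-}$ (so $1\le k<\beta(n+1)$). Each of the four probabilities (two blocks $\times$ two directions) is handled by one of two elementary moves. If the relevant binomial --- one of $N_{i}$, $L_{i}$, $n-N_{i}$, $n-L_{i}$ --- has mean $\le\tfrac1{4e}k$, the union bound $\mathbb{P}(\mathrm{Bin}(n,q)\ge k)\le\binom nk q^{k}\le(enq/k)^{k}$ combined with the corresponding ratio being $<1/(4e)$ gives a bound $\le 4^{-k}$; if instead it has mean $\ge2k$, the Chernoff lower tail gives $\le e^{-k/4}$. (For instance $\{x_{(i)}>x_{(i)}^{\ast}+\varepsilon\}$ on $T_{n}^{+}$ is $\{n-N_{i}\ge k\}$ with $\mathbb{E}(n-N_{i})=n(1-p_{i})\cdot\frac{1-F(F^{-1}(p_{i})+\varepsilon)}{1-F(F^{-1}(p_{i}))}\le\frac{nk}{n+1}\cdot\frac1{4e}\le\frac{k}{4e}$.) Thus each block sum is dominated, uniformly in large $n$, by a fixed convergent series such as $\sum_{k\ge1}4^{-k}$. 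Finally, for each \emph{fixed} $k$ the relevant quantile $F^{-1}(1-k/(n+1))$, respectively $F^{-1}(k/(n+1))$, tends to $+\infty$, respectively $-\infty$, as $n\to\infty$; so by \eqref{h} the governing ratio tends to $0$ (equivalently, the relevant binomial mean tends to $\infty$) and the $k$‑th bound tends to $0$, whence dominated convergence forces each block's contribution to $0$. Adding the core and block estimates gives the displayed limit, and hence the theorem.

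I expect the genuine obstacle to be the extreme order statistics, i.e.\ the small‑$k$ terms. There the Dvoretzky--Kiefer--Wolfowitz and Donsker scale $n^{-1/2}$ is far too coarse, because in probability space the extreme targets $F(x_{(n)}^{\ast}),F(x_{(n-1)}^{\ast}),\dots$ are spaced only $\sim n^{-1}$ apart; only the super-exponential decay built into \eqref{h} --- exactly the feature driving the likelihood ratios to $0$ and making the geometric-in-$k$ summation converge --- is strong enough to pin down all these points at once.
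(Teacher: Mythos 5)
Your proof is correct, but it follows a genuinely different route from the paper's. The paper transfers everything to the uniform distribution via the representation $x_{(i)}=F^{-1}(\gamma_{(i)})$: uniform order statistics are controlled multiplicatively in the tails through the exponential (R\'enyi) representation and a dyadic net of indices (Lemma 1), additively in the bulk through Dvoretzky--Kiefer--Wolfowitz (Lemma 2), and then a deterministic continuity lemma (Lemma 4) converts condition \eqref{h} into the statement that $F^{-1}$ maps ratio-close pairs near $0$ or $1$ to $\delta$-close points; a diagonal extraction then produces $(\delta_n)$. You instead stay on the real line and bound, for each order statistic separately, the probability of an $\varepsilon$-deviation via binomial counts: Hoeffding/Chernoff with a compactness gap $\eta$ in the bulk, and in the two edge blocks the likelihood-ratio consequence of \eqref{h} (the four correctly oriented ratios $F(t-\varepsilon)/F(t)$, $F(t)/F(t+\varepsilon)$, $(1-F(t+\varepsilon))/(1-F(t))$, $(1-F(t))/(1-F(t-\varepsilon))$ all small beyond fixed quantiles), giving bounds $4^{-k}$ and $e^{-k/4}$ in the distance-to-edge index $k$ so that the union bound is summable; since each fixed-$k$ term vanishes as $n\to\infty$ (the relevant quantile escapes to $\pm\infty$, so the governing ratio tends to $0$ or the binomial mean to $\infty$), Tannery/dominated convergence kills the block sums, and the same routine extraction yields $(\delta_n)$. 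The identifications of the deviation events with binomial tail events, the core estimate, the four block cases, and the extraction all check out. What the two approaches buy: the paper's lemmas are quantitative and uniform in $n$ (the $1-400T^{-1/2}$ bound of Lemma 1 and the metric-continuity lemmas), and are reused verbatim for Theorems 2--4; your argument is more elementary and self-contained (no R\'enyi representation, no DKW), and it isolates correctly that the extreme order statistics are the real difficulty, but it is purely asymptotic for a fixed $\varepsilon$ and would not by itself deliver the quantitative statements of Theorems 3 and 4.
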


\begin{theorem}
Let $\mu $ be any probability measure on $\mathbb{R}$ with a continuous
strictly increasing cumulative distribution function $F$ such that for all $%
\varepsilon >0$%
\begin{eqnarray}
\underset{t\rightarrow \infty }{\lim \sup }\frac{1-F(t+\varepsilon )}{1-F(t)}
&<&1  \label{l} \\
\underset{t\rightarrow -\infty }{\lim \sup }\frac{F(t)}{F(t+\varepsilon )}
&<&1  \label{m}
\end{eqnarray}%
\ Let $(\omega _{n})_{1}^{\infty }$ be any sequence in $\mathbb{N}$ with $%
\lim_{n\rightarrow \infty }\omega _{n}=\infty $. \ Then there exists a
sequence $(\delta _{n})_{1}^{\infty }$ with $\lim_{n\rightarrow \infty
}\delta _{n}=0$, such that for all $n\in \mathbb{N}$, if $(x_{i})_{1}^{n}$
is an i.i.d. sample from $\mu $ with corresponding order statistics $%
(x_{(i)})_{1}^{n}$, then with probability at least $1-\delta _{n}$,%
\[
\sup_{\omega _{n}\leq i\leq n-\omega _{n}}|x_{(i)}-x_{(i)}^{\ast }|\leq
\delta _{n}
\]%
where $x_{(i)}^{\ast }=F^{-1}(i/(n+1))$.
\end{theorem}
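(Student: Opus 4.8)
The plan is to transfer everything to the uniform order statistics via the quantile map, treat the ``bulk'' of the sample with the DKW inequality, and handle the two tails by proving that the extreme uniform order statistics concentrate \emph{multiplicatively}. Since $F$ is continuous and strictly increasing, $F^{-1}\colon(0,1)\to\mathbb R$ is well defined and continuous and $F^{-1}(U)\sim\mu$ for $U$ uniform on $[0,1]$, so we may realize $x_i=F^{-1}(U_i)$ with $(U_i)$ i.i.d.\ uniform and then $x_{(i)}=F^{-1}(U_{(i)})$. By a routine diagonal argument it suffices to show, for each fixed $\varepsilon>0$, that $\Pr[\sup_{\omega_n\le i\le n-\omega_n}|F^{-1}(U_{(i)})-F^{-1}(\tfrac{i}{n+1})|>\varepsilon]\to0$ as $n\to\infty$: given this, for each $k$ pick $N_k$ with this probability below $1/k$ for $\varepsilon=1/k$ and $n\ge N_k$, and set $\delta_n=1/k$ on $[N_k,N_{k+1})$ (and $\delta_n=1$ for $n<N_1$).

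Fix $\varepsilon>0$. The role of \eqref{l}--\eqref{m} is that they make $F^{-1}$ ``log-Lipschitz at every scale'' near the two ends. Indeed, applying \eqref{m} with $\varepsilon/2$ in place of $\varepsilon$ and choosing $\theta\in(0,1)$ above the corresponding $\limsup$, we get $t_0$ with $F(t)\le\theta F(t+\varepsilon/2)$ for $t\le t_0$; substituting $t=F^{-1}(p)$ and applying $F^{-1}$ shows, with $p_0:=F(t_0)$ (taken $\le\tfrac12$), that $|F^{-1}(p)-F^{-1}(q)|\le\varepsilon/2$ whenever $0<p,q\le p_0$ and $\theta\le p/q\le1/\theta$; \eqref{l} gives the mirror statement near $1$ with constants $\theta^\ast,p_0^\ast$, and on any compact subinterval of $(0,1)$ the function $F^{-1}$ is uniformly continuous. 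Set $\eta=\tfrac12\min\{\tfrac12,\theta p_0,\theta^\ast p_0^\ast\}$. For the bulk $\eta n\le i\le(1-\eta)n$, the DKW inequality applied to $(U_i)$ with $\lambda=n^{1/4}$ gives, with probability $\ge1-2e^{-2\sqrt n}$, that $\sup_i|U_{(i)}-\tfrac{i}{n+1}|\le n^{-1/4}+n^{-1}\to0$; since here $U_{(i)}$ and $\tfrac{i}{n+1}$ eventually both lie in $[\tfrac\eta4,1-\tfrac\eta4]$, uniform continuity of $F^{-1}$ there makes the corresponding differences $\le\varepsilon$ for $n$ large.

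The heart of the matter is the tails; by symmetry (or by applying the argument to $-x_i$, which interchanges \eqref{l} and \eqref{m}) consider $\omega_n\le i\le\eta n$. Additive $n^{-1/2}$-accuracy is worthless here since $\tfrac{i}{n+1}$ may be far smaller; instead one needs $U_{(i)}/(\tfrac{i}{n+1})\to1$ uniformly over $i\ge\omega_n$. Use the R\'enyi representation $(U_{(1)},\dots,U_{(n)})\overset{d}{=}(S_1/S_{n+1},\dots,S_n/S_{n+1})$ with $S_k=E_1+\dots+E_k$, $E_j$ i.i.d.\ $\mathrm{Exp}(1)$, so that $U_{(i)}/(\tfrac{i}{n+1})=(S_i/i)/(S_{n+1}/(n+1))$. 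Fix $\beta\in(0,1)$ with $[\tfrac{1-\beta}{1+\beta},\tfrac{1+\beta}{1-\beta}]\subseteq[\theta,1/\theta]$. As $(S_i-i)_i$ is a mean-zero random walk, Kolmogorov's maximal inequality $\Pr[\max_{i\le m}|S_i-i|\ge u]\le m/u^2$ applied over dyadic blocks $2^k\le i<2^{k+1}$ with $u=\beta2^k$ and summed over $2^{k+1}>\omega_n$ gives $\Pr[\exists\,i\ge\omega_n:|S_i/i-1|>\beta]=O(\beta^{-2}\omega_n^{-1})$, while $\Pr[|S_{n+1}/(n+1)-1|>\beta]\le\beta^{-2}(n+1)^{-1}$. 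Off this event, for all $\omega_n\le i\le\eta n$ we have $U_{(i)}/(\tfrac{i}{n+1})\in[\theta,1/\theta]$, and $\tfrac{i}{n+1}\le\eta<p_0$, $U_{(i)}\le\tfrac1\theta\cdot\tfrac{i}{n+1}<\tfrac\eta\theta\le p_0$, so the log-Lipschitz property yields $|F^{-1}(U_{(i)})-F^{-1}(\tfrac{i}{n+1})|\le\varepsilon/2$. Intersecting the bulk and two tail events, the total failure probability is $2e^{-2\sqrt n}+O(\beta^{-2}\omega_n^{-1})+O(\beta^{-2}n^{-1})\to0$ since $\omega_n\to\infty$ and $\beta$ depends only on $\varepsilon$, which proves the displayed limit for every $\varepsilon$ and hence the theorem.

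The step I expect to be the obstacle is this last one. A union bound over the individual indices $i\in[\omega_n,\eta n]$ would only succeed when $\omega_n\gtrsim\log n$, whereas the theorem must accommodate $\omega_n$ tending to infinity arbitrarily slowly; so the multiplicative concentration of the extreme order statistics has to be extracted through a maximal inequality across dyadic scales. Recognizing that near the ends one should measure accuracy multiplicatively rather than additively — and that this is exactly matched by reading \eqref{l}--\eqref{m} as a scale-free log-Lipschitz bound on $F^{-1}$ — is what makes the error $\varepsilon$, rather than merely $O(1)$, attainable in the tails.
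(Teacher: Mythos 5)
Your proof is correct, and its skeleton matches the paper's: realize $x_{(i)}=F^{-1}(\gamma_{(i)})$, use the R\'enyi representation (\ref{d}) of the uniform order statistics, convert \eqref{l}--\eqref{m} into a ``ratio'' modulus of continuity for $F^{-1}$ near $0$ and $1$, and assemble $\delta_n$ by a diagonal argument. Where you genuinely diverge is in the key probabilistic lemma and in the decomposition. The paper's lemma 3 establishes the multiplicative control $T^{-1}\le \gamma_{(i)}(i/(n+1))^{-1}\le T$ for all $\omega_n\le i\le n-\omega_n$ for \emph{every} fixed $T>1$ by invoking the law of the iterated logarithm for $\sum_k(z_k-1)$ (a soft, rate-free argument); since $T$ may be taken arbitrarily close to $1$, the paper's lemma 5 (two-sided ratio control implies $|F^{-1}(x)-F^{-1}(y)|\le\delta$ on all of $(0,1)$) then covers tails and bulk in one stroke, with no separate DKW step. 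You instead fix the ratio tolerance $\theta$ directly from the $\limsup$ conditions, prove the uniform multiplicative concentration of $S_i/i$ over $i\ge\omega_n$ by Kolmogorov's maximal inequality over dyadic blocks -- obtaining the explicit failure probability $O(\beta^{-2}\omega_n^{-1})$ -- and then must treat the bulk separately via DKW plus uniform continuity of $F^{-1}$ on a compact subinterval. Your route is more elementary and quantitative (it gives a rate in the probability, in the spirit of the paper's lemma 1 and theorem 3, which the LIL-based lemma 3 does not), at the cost of a three-zone decomposition; the paper's LIL route buys the $T\to1$ control that makes the reduction cleaner but yields a purely asymptotic statement. Your closing remark correctly identifies the crux: a naive union bound over indices would require $\omega_n\gtrsim\log n$, and the maximal-inequality-over-dyadic-scales device (or the LIL, as in the paper) is exactly what removes that restriction.
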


\begin{theorem}
Let $\mu $ be any probability measure on $\mathbb{R}$ that obeys the
conditions of theorem 2. \ Then there exists $k>0$ such that for all $%
T>10^{6}$ and all $n\in \mathbb{N}$, if $(x_{i})_{1}^{n}$ is an i.i.d.
sample from $\mu $ with corresponding order statistics $(x_{(i)})_{1}^{n}$,
then with probability at least $1-400T^{-1/2}$,%
\[
\sup_{1\leq i\leq n}|x_{(i)}-x_{(i)}^{\ast }|\leq kT
\]
\end{theorem}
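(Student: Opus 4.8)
The plan is to exploit the quantile transform: writing $u_i = F(x_i)$, the $u_i$ are i.i.d. uniform on $[0,1]$, and $x_{(i)} = F^{-1}(u_{(i)})$, while $x_{(i)}^{\ast} = F^{-1}(i/(n+1))$. So the task reduces to controlling $|F^{-1}(u_{(i)}) - F^{-1}(i/(n+1))|$ uniformly in $i$. The first step is therefore a deviation estimate for the uniform order statistics: for a suitable choice of error scale, with probability at least $1 - 400T^{-1/2}$ one has a bound of the form
\begin{equation}
\left| u_{(i)} - \frac{i}{n+1} \right| \le C\,\sqrt{T}\,\frac{\sqrt{\frac{i}{n+1}\left(1-\frac{i}{n+1}\right)}}{\sqrt{n}} + \frac{C\,T}{n} \notag
\end{equation}
simultaneously for all $i$. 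This is a standard weighted empirical-process bound; one gets it either from the DKW inequality quoted in the introduction together with a dyadic decomposition of $[0,1]$ near the endpoints, or from a union bound over $i$ using Chernoff estimates for the Beta$(i,n+1-i)$ distribution. The factor $400T^{-1/2}$ and the threshold $T > 10^{6}$ are there precisely to absorb the constants coming from this step.

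The second step is to transfer this additive control on the uniform side to additive control of $x_{(i)}$ via the behaviour of $F^{-1}$. Here the hypotheses (\ref{l})–(\ref{m}) of Theorem 2 are the crucial input: the condition $\limsup_{t\to\infty}(1-F(t+\varepsilon))/(1-F(t)) < 1$ says that the tail $1-F$ decays at least geometrically, which on the quantile side means that $F^{-1}(1-s)$ grows at most logarithmically in $1/s$, i.e. $F^{-1}$ is ``Lipschitz in the logarithm'' near the endpoints: there is a constant $L$ with
\begin{equation}
\left| F^{-1}(p) - F^{-1}(q) \right| \le L\,\left| \log p - \log q \right| + L\,\left| \log(1-p) - \log(1-q) \right| \notag
\end{equation}
for all $p, q \in (0,1)$ (away from the endpoints $F^{-1}$ is genuinely Lipschitz on compacts, and one stitches the two regimes together). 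Applying this with $p = u_{(i)}$ and $q = i/(n+1)$, and noting that the step-one bound guarantees $u_{(i)}$ and $i/(n+1)$ are within a bounded multiplicative factor of each other (so $|\log u_{(i)} - \log(i/(n+1))|$ and the corresponding upper-tail term are each $O(1)$ on the good event, in fact uniformly bounded by something like $O(\log T)$ only in the worst extreme indices but $O(1)$ after the $CT/n$ cushion kicks in), yields $\sup_i |x_{(i)} - x_{(i)}^{\ast}| \le kT$ for an absolute constant $k$ depending only on $\mu$ through $L$.

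The main obstacle is the behaviour at the two extreme indices $i = 1$ and $i = n$ (and a few neighbours): there $i/(n+1)$ is of order $1/n$, and the multiplicative fluctuation of $u_{(1)}$ around $1/n$ is genuinely of constant order (indeed $n\,u_{(1)}$ converges to an exponential), so $|\log u_{(1)} - \log(1/(n+1))|$ is $O(\log T)$ on the good event rather than $O(1)$. One must check that after composing with $F^{-1}$ this still produces only an $O(T)$ — not $O(T\log T)$ — contribution; this works because the geometric tail decay converts the $\log$-scale fluctuation into an \emph{additive} $x$-scale fluctuation of size $O(\log T)$, and for $T > 10^6$ we have $\log T \le T$, so it is absorbed into $kT$. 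Handling this endpoint regime carefully — and choosing the constants so the probability bound really is $1 - 400T^{-1/2}$ rather than something with an $n$-dependence — is where the bulk of the technical work lies; everything in the bulk $\omega_n \le i \le n - \omega_n$ is comparatively soft and essentially already contained in the machinery behind Theorem 2.
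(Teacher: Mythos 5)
There is a genuine gap in your step one, and it sits precisely at the extreme indices that you identify as the main obstacle. Your good event is the additive, variance-weighted bound $|u_{(i)}-\tfrac{i}{n+1}|\leq C\sqrt{T}\,\sqrt{\tfrac{i}{n+1}(1-\tfrac{i}{n+1})/n}+CT/n$. For $i\lesssim T$ the right-hand side exceeds $i/(n+1)$, so this event gives \emph{no lower bound whatsoever} on $u_{(i)}$ (and, symmetrically, no upper bound on $u_{(n)}$ near $1$): nothing in it excludes, say, $u_{(1)}\leq e^{-n}$, an event of probability about $e^{-n}\cdot n$ which is not ruled out by a bound holding with probability $1-400T^{-1/2}$ but must be excluded explicitly. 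Consequently your assertion that on the good event $u_{(i)}$ and $i/(n+1)$ are within a bounded multiplicative factor (or within $O(\log T)$ in log scale) is false as stated, and the subsequent composition with $F^{-1}$ breaks down exactly where it matters: if $u_{(1)}\leq e^{-n}$ then for a two-sided exponential law $x_{(1)}=F^{-1}(u_{(1)})$ is of order $-n$ while $x_{(1)}^{\ast}$ is of order $-\log n$, so no bound $kT$ with $k$ and $T$ independent of $n$ can hold on your event. What is needed is a two-sided \emph{ratio} control, $T^{-1}\leq u_{(i)}\bigl(\tfrac{i}{n+1}\bigr)^{-1}\leq T$ and $T^{-1}\leq (1-u_{(i)})\bigl(1-\tfrac{i}{n+1}\bigr)^{-1}\leq T$ simultaneously for all $i$ with probability $1-400T^{-1/2}$; this is exactly the paper's Lemma 1 (inequalities (\ref{e}) and (\ref{f})), proved via the representation of uniform order statistics as normalized partial sums of exponentials plus Chebyshev at dyadic indices, and it is where the constants $400T^{-1/2}$ and $T>10^{6}$ actually come from. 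Once you have the ratio bound, the paper's route is the one you sketch in step two: Lemma 5 converts the hypotheses (\ref{l})--(\ref{m}) into control of $F^{-1}$ along the log-scale metric $\theta_{1}$, and $\theta_{1}(u_{(i)},\tfrac{i}{n+1})\leq\log T\leq T$ finishes the proof.

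A secondary, fixable inaccuracy: the hypotheses of Theorem 2 do not give the global Lipschitz estimate $|F^{-1}(p)-F^{-1}(q)|\leq L|\log p-\log q|+L|\log(1-p)-\log(1-q)|$ for all $p,q\in(0,1)$. Conditions (\ref{l})--(\ref{m}) only yield, for each $\varepsilon>0$, a contraction factor per additive step $\varepsilon$ in the tails, which translates into a bound of the form $L+L|\log p-\log q|+L|\log(1-p)-\log(1-q)|$ (Lipschitz up to an additive constant, equivalently uniform continuity with respect to $\theta_{1}$, which is all Lemma 5 claims); moreover $F$ is only assumed continuous and strictly increasing, so its density may vanish at interior points and $F^{-1}$ need not be Lipschitz even on compact subintervals of $(0,1)$. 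The weaker estimate with the additive constant is entirely sufficient for the conclusion $kT$, so this does not damage the architecture of your argument, but as written the claim is stronger than what the hypotheses support.
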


Note that in theorem 2 we can take $(\omega _{n})_{1}^{\infty }$ to grow
arbitrarily slowly, for example let $\omega _{n}=\log \log \log n$. \ We
thus have tight control over almost the entire data set with the exception
of a \textit{very} small proportion of points. \ This is substantially
better than the $\sqrt{n}$ 'visibility' of Donsker's theorem.

A probability measure $\mu $ is called $p$-log-concave for some $p\in
(0,\infty )$ if it has a density function of the form $f(x)=c\exp
(-g(x)^{p}) $ where $g$ is non-negative and convex. \ The $1$-log-concave
distributions are simply referred to as log-concave. If $\mu $ is $p$%
-log-concave then it is also $q$-log-concave for all $1\leq q\leq p$.

\begin{theorem}
Let $p>1\,$, $q>0$ and let $\mu $ be a $p$-log-concave probability measure
on $\mathbb{R}$ with a continuous strictly increasing cumulative
distribution function $F$. \ Then there exists $c>0$ such that for any $n\in 
\mathbb{N}$ and any i.i.d. sample $(x_{i})_{1}^{n}$ from $\mu $ with order
statistics $(x_{(i)})_{1}^{n}$, with probability at least $1-c(\log n)^{-q}$,%
\[
\sup_{1\leq i\leq n}|x_{(i)}-x_{(i)}^{\ast }|\leq c\frac{\log \log n}{(\log
n)^{1-1/p}}
\]%
where $x_{(i)}^{\ast }=F^{-1}(i/(n+1))$.
\end{theorem}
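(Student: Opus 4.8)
The plan is to transfer the problem to uniform order statistics, control those simultaneously with the required confidence $1-c(\log n)^{-q}$, and then invoke a regularity estimate for the quantile function $F^{-1}$ that genuinely uses $p$-log-concavity with $p>1$.

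First I would couple the sample to the uniform model: write $x_{(i)}=F^{-1}(U_{(i)})$, where $U_{(1)}<\dots<U_{(n)}$ are the order statistics of an i.i.d.\ sample from the uniform distribution on $[0,1]$, and put $p_{i}=i/(n+1)$, so that $x_{(i)}-x_{(i)}^{\ast}=F^{-1}(U_{(i)})-F^{-1}(p_{i})$ and it suffices to estimate $\sup_{i}|F^{-1}(U_{(i)})-F^{-1}(p_{i})|$. Next comes the probabilistic input. Using the R\'enyi representation $U_{(i)}\stackrel{d}{=}S_{i}/S_{n+1}$ with $S_{k}=E_{1}+\dots+E_{k}$ and $E_{j}$ i.i.d.\ exponential, so that $1-U_{(i)}=(E_{i+1}+\dots+E_{n+1})/S_{n+1}$, I would apply Bernstein's inequality to $S_{n+1}$ and to the partial sum of the $m:=n+1-i$ variables $E_{i+1},\dots,E_{n+1}$, and take a union bound over $i$, calibrated with deviation parameter of order $\log n$ so that the total failure probability is at most $c(\log n)^{-q}$. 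This produces an event $\mathcal E_{n}$, of probability at least $1-c(\log n)^{-q}$, on which: in the bulk ($C\log n\le i\le n+1-C\log n$) one has $|U_{(i)}-p_{i}|\le C\sqrt{(\log n)/n}$; and in the two tails one has the multiplicative control $\big|\log\frac{1-U_{(i)}}{1-p_{i}}\big|\le C\log\log n$ (and its mirror image), together with $1-U_{(n)}\ge n^{-2}$ (and mirror image), which keeps all the relevant quantiles in the range where the analytic step applies.

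The analytic core is a modulus estimate for $F^{-1}$. Write the density as $f=c\,e^{-h}$ with $h=g^{p}$; since $p>1$ and $g\ge 0$ is convex, $h$ is convex, $g(t)\to\infty$ as $|t|\to\infty$ (by integrability), and one may fix $a_{0}$ so that $g'_{+}(t)\ge c_{0}$ and $h'_{+}(t)=p\,g(t)^{p-1}g'_{+}(t)\ge 1$ for $t\ge a_{0}$, and symmetrically below $-a_{0}'$. Two ingredients are needed. (i) Convexity of $h$ gives $1-F(a)=\int_{a}^{\infty}c\,e^{-h}\le c\,e^{-h(a)}/h'_{+}(a)$, so, writing $\Lambda(t)=-\log(1-F(t))$ for the integrated hazard, $\Lambda'(a)=f(a)/(1-F(a))\ge h'_{+}(a)$; and since $p$-log-concavity implies log-concavity, $\Lambda'$ is nondecreasing, whence for $a_{0}\le a<b$
\[
b-a\ \le\ \frac{\Lambda(b)-\Lambda(a)}{\Lambda'(a)}\ \le\ \frac{1}{h'_{+}(a)}\,\log\frac{1-F(a)}{1-F(b)}.
\]
(ii) Since $h'_{+}(a)\ge p\,c_{0}\,g(a)^{p-1}=p\,c_{0}\,h(a)^{1-1/p}$ while convexity of $h$ also yields $\Lambda(a)\le h(a)+\log h'_{+}(a+1)+O(1)$, one obtains a constant $c_{1}>0$ with
\[
h'_{+}(a)\ \ge\ c_{1}\Big(\log\tfrac{1}{1-F(a)}\Big)^{1-1/p}\qquad(a\ge a_{0}),
\]
and symmetrically below: if $h(a)$ dominates the error term then $\Lambda(a)\le 3h(a)$ and the bound follows from $h'_{+}(a)\ge p c_{0}h(a)^{1-1/p}$, while if $h$ grows so fast that the error dominates then $h'_{+}(a)$ already exceeds every fixed power of $\Lambda(a)$.

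Finally I would assemble the estimate on $\mathcal E_{n}$. For $i$ in the bulk, $F^{-1}$ is Lipschitz on the fixed compact interval of quantiles that $U_{(i)}$ and $p_{i}$ inhabit, so $|x_{(i)}-x_{(i)}^{\ast}|\le C|U_{(i)}-p_{i}|\le C\sqrt{(\log n)/n}$, which is far below the target. For $i$ in the tails, put $a=F^{-1}(\min(U_{(i)},p_{i}))$ and $b=F^{-1}(\max(U_{(i)},p_{i}))$, both beyond $a_{0}$; the two displays give $|x_{(i)}-x_{(i)}^{\ast}|=b-a\le c_{1}^{-1}\big(\log\tfrac{1}{1-F(a)}\big)^{-(1-1/p)}\,\big|\log\tfrac{1-U_{(i)}}{1-p_{i}}\big|$, and on $\mathcal E_{n}$ the first factor is $O\big((\log n)^{-(1-1/p)}\big)$ throughout the tails while the second is $O(\log\log n)$, so $|x_{(i)}-x_{(i)}^{\ast}|\le c\,(\log\log n)/(\log n)^{1-1/p}$. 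Taking the supremum over $i$, with routine care at the two interfaces between bulk and tails (slightly enlarging one regime and shrinking the other), gives the theorem. The step I expect to be the main obstacle is the uniform hazard-rate estimate (ii): it fails for general increasing convex $h$ — exponential tails give a bounded hazard rate, which is exactly why $p=1$ is excluded — so it must exploit that $g=h^{1/p}$ is convex, and the delicate point is to make the argument uniform over \emph{all} $p$-log-concave $\mu$, including those decaying faster than any iterated exponential, where the crude chord bounds lose a unit shift in the argument and one must instead observe that faster decay only makes $h'_{+}$, hence the hazard rate, larger.
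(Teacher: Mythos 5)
Your overall architecture (transfer to uniform order statistics, multiplicative control of the tail quantile ratios with failure probability $(\log n)^{-q}$, then a $\theta_p$-type modulus for $F^{-1}$) matches the paper, but the analytic step (ii) is a genuine gap: the pointwise bound $h'_+(a)\ge c_1\bigl(\log\frac{1}{1-F(a)}\bigr)^{1-1/p}$ is simply false for general $p$-log-concave measures, and your case analysis does not repair it. The trouble is exactly the point you flagged: in the estimate $\Lambda(a)\le h(a)+\log h'_+(a+1)+O(1)$ the error involves the derivative \emph{beyond} $a$, and for convex $g$ this can be astronomically larger than $h'_+(a)$. Concretely, take $p=2$ and $g$ piecewise linear with unit-spaced breakpoints $A_k$ and slopes $s_k$ on $[A_k,A_{k+1}]$ chosen recursively by $s_{k+1}=\exp(\exp(g(A_{k+1})))$. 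At $a=A_{k+1}-\varepsilon$ with $\varepsilon\approx(g(A_{k+1})s_{k+1})^{-1}$ one has $1-F(a)\approx e^{-h(A_{k+1})}/(g(A_{k+1})s_{k+1})$, hence $\Lambda(a)\gtrsim\log s_{k+1}=e^{g(A_{k+1})}$, while $h'_+(a)=2g(a)s_k\le 2g(A_{k+1})^2$; so $h'_+(a)\ll\Lambda(a)^{1/2}$ at arbitrarily large $a$, whatever $c_1,a_0$ you allow to depend on $\mu$. Your resolution ("faster decay only makes $h'_+$, hence the hazard rate, larger") is where this breaks: the fast decay does make the true hazard rate $\Lambda'(a)$ huge at such points, but your chain lower-bounds $\Lambda'(a)$ by $h'_+(a)$, which does not see the kink sitting just to the right of $a$. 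The inequality you actually need, $\Lambda'(a)\ge c\,\Lambda(a)^{1-1/p}$, \emph{is} true, but it must be proved non-locally: use convexity of $\Lambda=-\log(1-F)$ (log-concavity of $1-F$) to bound $\Lambda'(a)\ge(\Lambda(a)-\Lambda(a_1))/(a-a_1)$ by a chord from a fixed reference point $a_1$ (the paper uses the centroid, where $\Lambda\le 1$), and combine with the tail estimate $a\le C\Lambda(a)^{1/p}$ valid for $p$-log-concave densities. This is exactly the paper's route (Lemmas 6 and 7: log-concavity of $F$ and $1-F$ via Lov\'asz--Vempala, plus $F(t)\le c_1|t|^{1-p}e^{-c_2|t|^p}$), and with that substitution the rest of your tail argument goes through.

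A secondary, fixable, miscalibration: your bulk $C\log n\le i\le n+1-C\log n$ puts quantiles as small as $(\log n)/n$, which is not "a fixed compact interval"; there $F^{-1}$ has local Lipschitz constant of order $n$ up to logarithms (already for the Gaussian), so $C|U_{(i)}-p_i|\le C\sqrt{(\log n)/n}$ gives a divergent bound near the edge of your bulk. The split has to sit at a polynomial scale: the paper takes tails $i\le n^{3/4}$ and $i\ge n-n^{3/4}$ (where the ratio control with $T=(\log n)^{2q}$ yields the stated modulus, since $\log\frac{1}{1-F}\gtrsim\log n$ there), and in the bulk uses the Lipschitz bound $|F^{-1}(x)-F^{-1}(y)|\le c\varepsilon^{-1}|x-y|$ on $[\varepsilon,1-\varepsilon]$ with $\varepsilon=\tfrac12 n^{-1/4}$ together with additive control $|U_{(i)}-p_i|\le n^{-3/8}$, giving $O(n^{-1/8})$. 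Your Chernoff/union-bound ratio control extends to all $i\le n^{3/4}$ (indeed to all $i$, as in the paper's Lemma 1) at no cost in the probability, so this part is bookkeeping; the hazard-rate step above is the substantive gap.
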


The main idea behind the proof of these theorems is to first analyze the
uniform distribution on $[0,1]$. \ We do this using a powerful
representation of the empirical point process via independent random
variables that allows us to use classical results such as the law of large
numbers (in the form of Chebyshev's inequality) and the law of the iterated
logarithm. \ A key step in this analysis is to exploit the inherent
regularity of order statistics which allows for control over all points
based on an inspection of merely $\log n$ carefully chosen points. We then
transform the points under the action of $F^{-1}$ to analyze the general
case. \ We introduce a new class of metrics on $(0,1)$ defined by%
\begin{equation}
\theta _{p}(x,y)=\max \left\{ \frac{\log (x^{-1}y)}{(\log x^{-1})^{1-1/p}},%
\frac{\log ((1-y)^{-1}(1-x))}{(\log (1-y)^{-1})^{1-1/p}}\right\}  \label{q}
\end{equation}%
for $1\leq p<\infty \ $and $0<x\leq y<1$. \ To see that each $\theta _{p}$
is indeed a metric, note that $\theta _{p}(x,y)$ is decreasing in $x$ and
increasing in $y$ throughout the triangular region $\{(x,y)\in
(0,1)^{2}:x<y\}$. We show that $F^{-1}$ is either Lipschitz or uniformly
continuous with respect to these metrics (depending on the assumptions
imposed on $\mu $). \ After this, our main results become straightforward to
prove.

There are endless variations on the main theme of this paper.\ Our intention
is simply to highlight a phenomenon and introduce methods by which to study
it. Note that our results are purely asymptotic in nature and we can (and
do) assume throughout the paper that $n>n_{0}$\bigskip\ for some $n_{0}\in 
\mathbb{N}$.

\section{The uniform distribution}

Let $(\gamma _{i})_{1}^{n}$ denote an i.i.d. sample from the uniform
distribution on $[0,1]$ with corresponding order statistics $(\gamma
_{(i)})_{1}^{n}$ and let $(z_{i})_{1}^{n+1}$ be an i.i.d. sequence of random
variables that follow the standard exponential distribution. \ For $1\leq
i\leq n$ define%
\[
y_{i}=\left( \sum_{j=1}^{i}z_{j}\right) \left( \sum_{j=1}^{n+1}z_{j}\right)
^{-1}
\]%
It is of great interest to us that $(y_{i})_{1}^{n}$ and $(\gamma
_{(i)})_{1}^{n}$ have the same distribution in $\mathbb{R}^{n}$ (see chapter
5 in \cite{De}). This is nothing but an expression of the fact that the
empirical point process locally resembles the Poisson point process. \ Also
of interest is the fact that these random vectors have the same distribution
as the partial sums of a random vector uniformly distributed (with respect
to Lebesgue measure) in the standard simplex $\Delta ^{n}=\{w\in \mathbb{R}%
^{n+1}:w_{i}\geq 0$ $\forall i$, $\sum_{i}w_{i}=1\}$. \ The power of this
representation is that we have an expression for $(\gamma _{(i)})_{1}^{n}$
in terms of independent random variables. \ Note that%
\begin{equation}
y_{i}=\frac{i}{n+1}\left( \frac{1}{i}\sum_{j=1}^{i}z_{j}\right) \left( \frac{%
1}{n+1}\sum_{j=1}^{n+1}z_{j}\right) ^{-1}  \label{d}
\end{equation}%
Both lemma 1 and lemma 3 below can be compared to the results in \cite{We2}.

\begin{lemma}
Let $T>10^{6}$ and $n\in \mathbb{N}$. \ With probability at least $%
1-400T^{-1/2}$ the following inequalities hold simultaneously for all $1\leq
i\leq n$,%
\begin{equation}
T^{-1}\leq \gamma _{(i)}\left( \frac{i}{n+1}\right) ^{-1}\leq T  \label{e}
\end{equation}%
\begin{equation}
T^{-1}\leq (1-\gamma _{(i)})\left( 1-\frac{i}{n+1}\right) ^{-1}\leq T
\label{f}
\end{equation}
\end{lemma}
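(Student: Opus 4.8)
The plan is to exploit the representation $\gamma_{(i)} \overset{d}{=} y_i = \left(\sum_{j=1}^{i} z_j\right)\left(\sum_{j=1}^{n+1} z_j\right)^{-1}$ and reduce both \eqref{e} and \eqref{f} to two-sided deviation bounds for partial sums of i.i.d. standard exponentials. Writing $S_i = \sum_{j=1}^{i} z_j$, so that $\mathbb{E}S_i = i$ and $\operatorname{Var}(S_i) = i$, the ratio in \eqref{e} becomes $y_i (i/(n+1))^{-1} = (S_i/i)(S_{n+1}/(n+1))^{-1}$ by \eqref{d}. Thus \eqref{e} will follow if $S_i/i$ stays in a band like $[T^{-1/2}, T^{1/2}]$ for \emph{all} $1 \le i \le n+1$ simultaneously (with the same control then applied to $S_{n+1}/(n+1)$), and similarly for the complementary sums $\sum_{j=i+1}^{n+1} z_j$ to handle \eqref{f}, since $1 - y_i = (S_{n+1} - S_i)/S_{n+1}$ and $1 - i/(n+1) = (n+1-i)/(n+1)$.

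The key step is a maximal inequality: I would bound $\mathbb{P}\big(\sup_{1 \le i \le m} |S_i/i - 1| \text{ is large}\big)$ uniformly in $m$. The naive union bound over $i$ fails because $\operatorname{Var}(S_i/i) = 1/i$ decays only like $1/i$, giving a divergent sum. The standard fix, and the one suggested by the paper's remark about "inspection of merely $\log n$ carefully chosen points," is dyadic blocking: for $i \in [2^{k}, 2^{k+1})$ the quantity $S_i$ is squeezed between $S_{2^k}$ and $S_{2^{k+1}}$ (monotonicity of partial sums of nonnegative variables), so it suffices to control $S_{2^k}/2^k$ at the $O(\log m)$ breakpoints $2^k$, each via Chebyshev's inequality $\mathbb{P}(|S_{2^k}/2^k - 1| > \varepsilon_k) \le (2^k \varepsilon_k^2)^{-1}$. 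Choosing the thresholds $\varepsilon_k$ to grow slowly with $k$ (so that the upper bound stays bounded by a constant multiple of $T^{-1/2}$ while $\sum_k (2^k \varepsilon_k^2)^{-1}$ converges) yields, after accounting for the factor-$2$ slack from replacing $i$ by the nearest power of $2$, the bound $\gamma_{(i)}(i/(n+1))^{-1} \in [T^{-1}, T]$ for all $i$ on an event of probability at least $1 - cT^{-1/2}$. The same argument run "from the right end" — controlling $\sum_{j=i+1}^{n+1} z_j$ over dyadic blocks of $n+1-i$ — gives \eqref{f}. Tracking the constants through the four sums (lower and upper tails, left and right ends) and the $S_{n+1}$ normalization is what produces the explicit $400T^{-1/2}$.

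The main obstacle is purely the bookkeeping of constants: one must choose the dyadic thresholds $\varepsilon_k$ explicitly (for instance proportional to $2^{-k/2}$ times a slowly growing factor, or simply a constant depending on $T$), verify that after multiplying through by the monotonicity slack the event still implies the clean bounds $[T^{-1},T]$ rather than something like $[2T^{-1}, 2T]$, and check that the four contributions sum to something below $400T^{-1/2}$ for all $T > 10^6$. A secondary point requiring a line of care is that the bound must hold for \emph{every} $n$, including small $n$; but since the probability estimate is uniform in $n$ (the dyadic sum $\sum_{k : 2^k \le n+1}$ is dominated by the full series) this causes no difficulty, and for tiny $n$ the statement can also be made trivially true by enlarging constants. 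No law of the iterated logarithm is needed here — plain Chebyshev on a logarithmic skeleton suffices — which is presumably why this is stated as Lemma~1.
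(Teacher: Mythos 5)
Your overall skeleton --- the representation \eqref{d}, Chebyshev control at the $O(\log n)$ dyadic indices, monotonicity of the partial sums to pass from the skeleton to all $i$, and symmetry for \eqref{f} --- is exactly the paper's. But there is a genuine gap in the deviation step: plain Chebyshev applied to $S_i=\sum_{j\leq i}z_j$ cannot produce the \emph{lower} bound you need. To conclude $S_i/i\geq T^{-1/2}$ (or any positive lower bound) from $|S_i/i-1|\leq\varepsilon_k$ you must take $\varepsilon_k<1$, and then Chebyshev's failure probability $(2^k\varepsilon_k^2)^{-1}$ is $\geq 1$ at the first skeleton point $i=1$, i.e.\ vacuous, and summed over the skeleton it is a constant, not $O(T^{-1/2})$. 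Indeed $\mathbb{P}(z_1\leq\epsilon)\approx\epsilon$, so no variance-only argument can certify a positive lower bound on $S_1$ with probability close to $1$; some use of the actual exponential lower tail is unavoidable.

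The paper's proof contains a separate device precisely for this point: it applies Chebyshev not to $S_i$ but to the binomial count $U_i=|\{j\leq i:z_j\leq 2Q^{-1}\}|$ with $Q=T^{1/2}/2$ (success probability at most $2Q^{-1}$), obtaining $U_i<i/2$, hence $S_i/i>Q^{-1}$, with failure probability at most $32i^{-1}Q^{-1}$; the two-sided Chebyshev on $S_i$ is used only for the upper bound $S_i/i<Q+1$, giving \eqref{c}. This lower-tail step is not bookkeeping: it is what forces the $T^{-1/2}$ (rather than $T^{-1}$) rate in the statement. Your upper-bound half, the dyadic factor-of-two slack, and the treatment of $S_{n+1}/(n+1)$ are fine as sketched (the paper simply includes $i=n+1$ among the controlled indices), and you are right that no law of the iterated logarithm is needed in this lemma; but to close the argument you must replace the lower-tail Chebyshev by the exponential (Gamma) lower tail, a Chernoff bound, or the paper's counting trick.
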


\begin{proof}
Let $Q=2^{-1}T^{1/2}$ and momentarily fix $1\leq i\leq n+1$. The random
variable $i^{-1}\sum_{j=1}^{i}z_{j}$ has mean $1$ and variance $i^{-1}$. \
Using Chebyshev's inequality, with probability at least $1-i^{-1}Q^{-2}$ we
have%
\[
-Q<1-\frac{1}{i}\sum_{j=1}^{i}z_{j}<Q
\]%
The random variable%
\[
U_{i}=|\{j\in \mathbb{N}:j\leq i,z_{j}\leq 2Q^{-1}\}|
\]%
follows a binomial distribution with $i$ trials and success probability $%
1-\exp (-2Q^{-1})\leq 2Q^{-1}$. \ Using Chebyshev's inequality again, with
probability at least $1-32i^{-1}Q^{-1}$ we have $U_{i}<i/2$, which implies
that $i^{-1}\sum_{j=1}^{i}z_{j}>Q^{-1}$. Hence, with probability at least $%
1-33i^{-1}Q^{-1}$ we have%
\begin{equation}
Q^{-1}<\frac{1}{i}\sum_{j=1}^{i}z_{j}<Q+1  \label{c}
\end{equation}%
Let $M=\left\lfloor \log _{2}(n)\right\rfloor $. \ With probability at least 
$1-33Q^{-1}\sum_{j=0}^{M}2^{-j}-33(n+1)^{-1}Q^{-1}\geq 1-100Q^{-1}$equation (%
\ref{c}) holds simultaneously for $i=1,2,2^{2},2^{3}\ldots 2^{M}$ and for $%
i=n+1$. Hence, by (\ref{d}), with probability at least $1-100Q^{-1}$ we have
that for all such $i$%
\[
\frac{1}{2}Q^{-2}\frac{i}{n+1}\leq y_{i}\leq 2Q^{2}\frac{i}{n+1}
\]%
Since $(y_{i})_{1}^{n}$ is an increasing sequence, control over the values $%
(y_{2^{j}})_{j=1}^{M}$ leads to control over the entire sequence and,
recalling the representation of $(\gamma _{(i)})_{1}^{n}$ in terms of $%
(y_{i})_{1}^{n}$, the bound (\ref{e}) follows for all $1\leq i\leq n$. The
bound (\ref{f}) then follows by symmetry.
\end{proof}

\begin{lemma}
Let $t\in (0,1)$ and $n\in \mathbb{N}$. \ With probability at least $1-2\exp
(-nt^{2}/5)$ the following inequality holds simultaneously for all $1\leq
i\leq n$,%
\begin{equation}
\left\vert \gamma _{(i)}-\frac{i}{n+1}\right\vert \leq t  \label{p}
\end{equation}
\end{lemma}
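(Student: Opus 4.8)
The plan is to deduce \eqref{p} from the Dvoretzky--Kiefer--Wolfowitz inequality quoted in the introduction, after rephrasing the event as a bound on a Kolmogorov--Smirnov distance. Let $G_{n}(s)=n^{-1}|\{i\le n:\gamma _{i}\le s\}|$ be the empirical distribution function of $\gamma _{1},\dots ,\gamma _{n}$ and set $D_{n}=\sup _{s\in [0,1]}|G_{n}(s)-s|$. Almost surely the $\gamma _{i}$ are distinct and lie in $(0,1)$, so $G_{n}$ increases by exactly $n^{-1}$ at each $\gamma _{(i)}$; thus $G_{n}(\gamma _{(i)})=i/n$, while $G_{n}(s)=(i-1)/n$ whenever $\gamma _{(i-1)}\le s<\gamma _{(i)}$ (with $\gamma _{(0)}:=0$). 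Feeding $s=\gamma _{(i)}$ into $|G_{n}(s)-s|\le D_{n}$, and letting $s\uparrow \gamma _{(i)}$ in the relation $|(i-1)/n-s|\le D_{n}$, we obtain $|\gamma _{(i)}-i/n|\le D_{n}$ and $|\gamma _{(i)}-(i-1)/n|\le D_{n}$ for every $i$.

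Next, $i/(n+1)$ lies in the closed segment with endpoints $(i-1)/n$ and $i/n$, because $i/(n+1)-(i-1)/n=(n+1-i)/(n(n+1))\ge 0$ and $i/n-i/(n+1)=i/(n(n+1))\ge 0$. Since no point of a segment is farther from a given real number than the more distant endpoint, $|\gamma _{(i)}-i/(n+1)|\le \max \{|\gamma _{(i)}-(i-1)/n|,\,|\gamma _{(i)}-i/n|\}\le D_{n}$, and hence $\sup _{1\le i\le n}|\gamma _{(i)}-i/(n+1)|\le D_{n}$. So it suffices to show $\mathbb{P}(D_{n}>t)\le 2\exp(-nt^{2}/5)$. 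Applying the Dvoretzky--Kiefer--Wolfowitz inequality to the uniform distribution on $[0,1]$ with $\lambda =\sqrt{n}\,t$ gives $\mathbb{P}(D_{n}>t)\le 2\exp(-2nt^{2})\le 2\exp(-nt^{2}/5)$, which is \eqref{p}. If one prefers to avoid the sharp constant in that inequality, the same bound follows from McDiarmid's bounded-differences inequality applied to $D_{n}$ as a function of $\gamma _{1},\dots ,\gamma _{n}$, whose oscillation in each coordinate is at most $n^{-1}$, together with the classical estimate $\mathbb{E}D_{n}=O(n^{-1/2})$ and the standing assumption $n>n_{0}$.

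The substance of the argument is precisely this reduction to $D_{n}$: one cannot control $\sup _{i}|\gamma _{(i)}-i/(n+1)|$ efficiently by a union bound over the $n$ order statistics, since those events are strongly dependent and such a bound would leave a spurious factor $n$ (equivalently, a polynomial prefactor in $t^{-1}$ if one passes to a net) that is incompatible with the clean exponential form asserted here. An input that already treats the supremum --- DKW, or a bounded-differences/maximal inequality --- is therefore essential; once it is in hand, the only remaining point is the harmless bookkeeping around the index shift $i/n\leftrightarrow i/(n+1)$, for which the deliberately loose constant $1/5$ leaves more than enough room.
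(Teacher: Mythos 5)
Your proof is correct and takes essentially the same route as the paper: reduce the simultaneous bound on the order statistics to the Kolmogorov--Smirnov statistic $D_{n}$ and invoke the Dvoretzky--Kiefer--Wolfowitz inequality. The only difference is bookkeeping --- the paper absorbs the $i/(n+1)$ versus $i/n$ shift via the crude bound $1/n\le 2t/3$ and applies DKW at level $t/3$, whereas you sandwich $i/(n+1)$ between $(i-1)/n$ and $i/n$, which even gives the slightly sharper exponent $2nt^{2}$ before relaxing to $nt^{2}/5$.
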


\begin{proof}
We can assume without loss of generality that $n^{-1}\leq 2t/3$ (otherwise
the probability bound becomes trivial). Note that since our sample is taken
from the uniform distribution we have%
\begin{eqnarray*}
\sup_{1\leq i\leq n}|\gamma _{(i)}-i(n+1)^{-1}| &\leq &n^{-1}+\sup_{1\leq
i\leq n}|\gamma _{(i)}-in^{-1}| \\
&=&n^{-1}+\sup_{0\leq t\leq 1}|F_{n}(t)-F(t)|
\end{eqnarray*}%
where $F(t)=t$ is the cumulative distribution function and $F_{n}$ is the
empirical distribution function. \ By the Dvoretzky-Kiefer-Wolfowitz
inequality (as mentioned in the introduction), with probability at least $%
1-2\exp (-5^{-1}nt^{2})$ we have%
\[
\sup_{0\leq t\leq 1}|F_{n}(t)-F(t)|\leq t/3
\]%
and the result follows.
\end{proof}

Note that in the preceding proof one can also use Doob's martingale
inequality (in the form of Kolmogorov's inequality) and the representation
of $(\gamma _{(i)})_{1}^{n}$ in terms of $(y_{n})_{1}^{n}$, although this
approach yields an inferior probability bound.

\begin{lemma}
Let $(\omega _{n})_{1}^{\infty }$ be any sequence in $\mathbb{N}$ such that $%
\lim_{n\rightarrow \infty }\omega _{n}=\infty $. \ Then for all $T>1$ and
all $\delta \in (0,1)$ there exists $n_{0}\in \mathbb{N}$ such that for all $%
n>n_{0}$, if $(\gamma _{(i)})_{1}^{n}$ are the order statistics from an
i.i.d. sample from the uniform distribution on $[0,1]$, then with
probability at least $1-\delta $, (\ref{e}) and (\ref{f}) hold for all $%
\omega _{n}\leq i\leq n-\omega _{n}$.
\end{lemma}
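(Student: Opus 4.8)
The plan is to avoid working with the $\gamma_{(i)}$ directly and instead exploit the exponential representation. Since $(\gamma_{(i)})_1^n$ and $(y_i)_1^n$ have the same law in $\mathbb{R}^n$, and the event ``\eqref{e} and \eqref{f} hold for all $\omega_n\le i\le n-\omega_n$'' is an event about the whole vector of order statistics, it suffices to prove the statement with each $\gamma_{(i)}$ replaced by $y_i$. Fix $T>1$ and set $\varepsilon=(T-1)/(T+1)\in(0,1)$, so that $(1+\varepsilon)/(1-\varepsilon)=T$. For $1\le k\le n+1$ write $A_k=k^{-1}\sum_{j=1}^{k}z_j$ for the average of the first $k$ exponentials, $A_k'=k^{-1}\sum_{j=n+2-k}^{n+1}z_j$ for the average of the last $k$, and $B=A_{n+1}$. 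By \eqref{d} one has $y_i(i/(n+1))^{-1}=A_i/B$, and writing $1-y_i=\bigl(\sum_{j=i+1}^{n+1}z_j\bigr)\bigl(\sum_{j=1}^{n+1}z_j\bigr)^{-1}$ in the same way, $(1-y_i)(1-i/(n+1))^{-1}=A_{n+1-i}'/B$.

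The elementary point is that if all the relevant averages lie in $[1-\varepsilon,1+\varepsilon]$ then both ratios lie in $[(1-\varepsilon)/(1+\varepsilon),(1+\varepsilon)/(1-\varepsilon)]=[T^{-1},T]$. Concretely, if $\sup_{\omega_n\le k\le n+1}|A_k-1|\le\varepsilon$ and $\sup_{\omega_n\le k\le n}|A_k'-1|\le\varepsilon$, then for every $\omega_n\le i\le n-\omega_n$ (which forces $n+1-i\in[\omega_n,n]$) both \eqref{e} and \eqref{f} hold; here $B=A_{n+1}$ is covered by the first supremum. So the whole task reduces to bounding the probability that one of these two suprema exceeds $\varepsilon$.

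For the first supremum, the strong law of large numbers gives $A_k\to 1$ almost surely, so the decreasing events $\{\sup_{k\ge m}|A_k-1|>\varepsilon\}$ have intersection of probability zero, whence $h(m):=\mathbb{P}(\sup_{k\ge m}|A_k-1|>\varepsilon)\to 0$ as $m\to\infty$, with $h$ non-increasing. For the second, reversing the i.i.d. sequence $(z_1,\dots,z_{n+1})$ shows that $(A_k')_{k=1}^{n+1}$ has the same joint law as $(A_k)_{k=1}^{n+1}$, so $\mathbb{P}(\sup_{\omega_n\le k\le n}|A_k'-1|>\varepsilon)\le h(\omega_n)$, and similarly $\mathbb{P}(\sup_{\omega_n\le k\le n+1}|A_k-1|>\varepsilon)\le h(\omega_n)$. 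Given $\delta\in(0,1)$, choose $m$ with $h(m)<\delta/2$ and then, using $\omega_n\to\infty$, choose $n_0$ with $\omega_n\ge m$ for all $n>n_0$; for such $n$ a union bound yields that \eqref{e} and \eqref{f} hold simultaneously for all $\omega_n\le i\le n-\omega_n$ with probability at least $1-2h(\omega_n)\ge 1-2h(m)>1-\delta$.

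I do not expect a serious obstacle: the content is simply that pushing the lower cutoff $\omega_n$ to infinity forces even the shortest average $A_{\omega_n}$ to be a true average of many i.i.d.\ exponentials, so the strong law controls the entire tail $\{k\ge\omega_n\}$ uniformly, and the symmetric bound \eqref{f} is just the reversal of the sequence. The only thing requiring care is the bookkeeping around the index $n+1$ (so that $B$ is included) and around $n+1-i$ remaining in range. If an explicit $n_0$ is desired, the soft strong-law step can be replaced by dyadic blocking together with Kolmogorov's maximal inequality applied to the martingale $\sum_{j\le k}(z_j-1)$, giving $h(m)=O(\varepsilon^{-2}m^{-1})$.
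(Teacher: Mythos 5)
Your proof is correct, and it rests on the same foundation as the paper's: the representation \eqref{d} of the uniform order statistics through partial sums of i.i.d.\ exponentials, uniform control of the averages $k^{-1}\sum_{j\leq k}z_{j}$ over all indices $k\geq \omega _{n}$, and a reversal/symmetry argument for \eqref{f}; your bookkeeping (the choice $\varepsilon =(T-1)/(T+1)$, the check that $n+1-i\in \lbrack \omega _{n},n]$, the inclusion of the denominator $A_{n+1}$ in the supremum, and the transfer from $(y_{i})$ to $(\gamma _{(i)})$ by equality in law of the whole vectors) is accurate. Where you differ is in the tool used for the key step: the paper invokes the law of the iterated logarithm to define the quantities $\lambda _{j}$ and to get the quantitative bound $(8\log \log \omega _{n}/\omega _{n})^{1/2}\leq T^{1/2}-1$ for the numerator averages, and it handles the denominator separately via Chebyshev's inequality and the Lipschitz property of $u\mapsto u^{-1}$; you instead use only the strong law of large numbers in soft form (continuity from above gives $h(m)\rightarrow 0$) and fold the denominator into the same supremum. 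Your route is more elementary and entirely sufficient, since the lemma is purely qualitative in $n_{0}$; the paper's LIL-plus-Chebyshev argument buys an explicit rate in $\omega _{n}$ (hence in principle an explicit $n_{0}$), which your version recovers only in the closing aside via Kolmogorov's maximal inequality.
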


\begin{proof}
We use the representation (\ref{d}). \ Let $T>1$ and $\delta \in (0,1)$ be
given. \ Without loss of generality we may assume that $T\leq 2$. Let $(%
\widetilde{z}_{i})_{1}^{\infty }$ denote any i.i.d. sequence of random
variables that follow the standard exponential distribution. \ Define the
deterministic sequence $(\lambda _{j})_{1}^{\infty }$ as follows,%
\[
\lambda _{j}=\mathbb{P}\{\sup_{i\geq j}(2i\log \log i)^{-1/2}\left\vert
\sum\limits_{k=1}^{i}(\widetilde{z}_{k}-1)\right\vert \leq 2\}
\]%
Note that $(\lambda _{j})_{1}^{\infty }$ is an increasing sequence and by
the law of the iterated logarithm, $\lim_{j\rightarrow \infty }\lambda
_{j}=1 $. \ Fix $n_{0}\in \mathbb{N}$ with $n_{0}\geq 64\delta
^{-1}(T^{1/2}-1)^{-2} $ such that for all $n>n_{0}$ we have the following
inequalities,%
\begin{eqnarray*}
\lambda _{\omega (n)} &\geq &1-\delta /4 \\
\left( \frac{8\log \log \omega _{n}}{\omega _{n}}\right) ^{1/2} &\leq
&T^{1/2}-1
\end{eqnarray*}%
Now consider any $n>n_{0}$ and let $(\gamma _{(i)})_{1}^{n}$ denote the
order statistics mentioned in the statement of the lemma. \ With probability
at least $1-\delta /4$, for all $\omega (n)\leq i\leq n$,%
\begin{eqnarray*}
\left\vert 1-\frac{1}{i}\sum_{j=1}^{i}z_{j}\right\vert &\leq &\left( \frac{%
8\log \log \omega _{n}}{\omega _{n}}\right) ^{1/2} \\
&\leq &T^{1/2}-1
\end{eqnarray*}%
By Chebyshev's inequality and the fact that the function $u\mapsto u^{-1}$
is 4-Lipschitz on $[1/2,\infty )$, with probability at least $%
1-16n^{-1}(T^{1/2}-1)^{-2}\geq 1-\delta /4$%
\[
\left\vert 1-\left( \frac{1}{n+1}\sum_{j=1}^{n+1}z_{j}\right)
^{-1}\right\vert <T^{1/2}-1
\]%
By (\ref{d}), with probability at least $1-\delta /2$, (\ref{e}) holds for
all $\omega (n)\leq i\leq n$. \ By symmetry, with the same probability (\ref%
{f}) holds for all $1\leq i\leq n-\omega (n)$. \ The lemma is thus proven.
\end{proof}

\bigskip

\section{The general case}

\begin{lemma}
Let $F$ be a continuous strictly increasing cumulative distribution function
that satisfies (\ref{h}). \ Then $F^{-1}$ is continuous and for all $T>1$
and all $\delta >0$ there exists $\eta \in (0,1)$ such that for all $x,y\in
(0,\eta )$ with $T^{-1}\leq xy^{-1}\leq T$ and all $x,y\in (1-\eta ,1)$ with 
$T^{-1}\leq (1-x)(1-y)^{-1}\leq T$ we have $|F^{-1}(x)-F^{-1}(y)|\leq \delta 
$.
\end{lemma}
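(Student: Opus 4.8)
The plan is to treat the three assertions in turn: continuity of $F^{-1}$, the left-tail estimate, and the right-tail estimate, with the last two being mirror images of each other.

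First I would record that $F^{-1}$ is genuinely a continuous function. Because $F$ is continuous and strictly increasing with $F(t)\to 0$ as $t\to-\infty$ and $F(t)\to 1$ as $t\to+\infty$, the intermediate value theorem shows that $F$ maps $\mathbb{R}$ bijectively onto $(0,1)$; hence $F^{-1}\colon (0,1)\to\mathbb{R}$ is a strictly increasing surjection, and a monotone function whose range is an interval can have no jump discontinuity, so $F^{-1}$ is continuous. The only consequence of this I shall need is that $F^{-1}(\eta)\to-\infty$ and $F^{-1}(1-\eta)\to+\infty$ as $\eta\to 0^{+}$, which is what lets me push the relevant points into the region where hypothesis (\ref{h}) is effective.

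For the quantitative part, I would fix $T>1$ and $\delta>0$ and apply (\ref{h}) with the particular choice $\varepsilon=\delta$: this yields reals $t_{0},t_{1}$ such that $F(t)/F(t+\delta)<T^{-1}$ for all $t\le t_{0}$ and $(1-F(t+\delta))/(1-F(t))<T^{-1}$ for all $t\ge t_{1}$. Using the limits noted above, choose $\eta\in(0,1)$ small enough that $F^{-1}(\eta)<t_{0}$ and $F^{-1}(1-\eta)>t_{1}$. Now suppose $x,y\in(0,\eta)$ with $T^{-1}\le x/y\le T$; since $|F^{-1}(x)-F^{-1}(y)|$ and this constraint are both symmetric in $x$ and $y$, I may assume $x\le y$, so $1\le y/x\le T$. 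Put $a=F^{-1}(x)$ and $b=F^{-1}(y)$, so $a\le b$ and $a<t_{0}$. If $b-a>\delta$, then, $F$ being increasing, $y=F(b)>F(a+\delta)$, hence $y/x>F(a+\delta)/F(a)=(F(a)/F(a+\delta))^{-1}>T$, contradicting $y/x\le T$. Therefore $|F^{-1}(x)-F^{-1}(y)|=b-a\le\delta$.

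The right-tail estimate is obtained by the same argument applied near $1$: given $x,y\in(1-\eta,1)$ with $T^{-1}\le (1-x)/(1-y)\le T$, assume $x\le y$ so that $1\le (1-x)/(1-y)\le T$, set $a=F^{-1}(x)\le b=F^{-1}(y)$, and note $a>t_{1}$; if $b-a>\delta$ then $1-y=1-F(b)<1-F(a+\delta)$, whence $(1-x)/(1-y)>((1-F(a+\delta))/(1-F(a)))^{-1}>T$, again a contradiction, so $|F^{-1}(x)-F^{-1}(y)|=b-a\le\delta$. I do not expect a genuine obstacle here: the entire content is the single application of (\ref{h}) with $\varepsilon=\delta$, and everything else is monotonicity bookkeeping — the only thing to watch is keeping each chain of inequalities pointed the right way and checking that the two ``assume $x\le y$'' reductions really are harmless.
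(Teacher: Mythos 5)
Your proposal is correct and follows essentially the same route as the paper's proof: apply (\ref{h}) with $\varepsilon=\delta$ to get tail thresholds, take $\eta$ small enough to push $x,y$ past those thresholds, and conclude by monotonicity of $F$ (the paper argues directly that $y\leq Tx=TF(s)<F(s+\delta)$ forces $F^{-1}(y)<F^{-1}(x)+\delta$, while you phrase the same step as a contradiction). The only cosmetic difference is that you spell out the continuity of $F^{-1}$ and the right-tail case, which the paper leaves as "identical."
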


\begin{proof}
Consider any $T>1$ and $\delta >0$. By (\ref{h}) there exists $t_{0}\in 
\mathbb{R}$ such that for all $t\leq t_{0}$, $TF(t)<F(t+\delta )$. \ Let $%
\eta _{1}=F(t_{0})$. \ Consider any $x,y\in (0,\eta _{1})$ such that $%
T^{-1}\leq xy^{-1}\leq T$. \ Without loss of generality, $x<y$. \ Let $%
s=F^{-1}(x)$ and $t=F^{-1}(y)$. \ Then $s\leq t_{0}$, hence $F(t)=y\leq
Tx=TF(s)<F(s+\delta )$, from which it follows that $t<s+\delta $ and that $%
|F^{-1}(x)-F^{-1}(y)|\leq \delta $. \ Analysis of the right hand tail is
identical and provides us with $\eta _{2}>0$ such that for all $x,y\in
(1-\eta _{2},1)$ with $T^{-1}\leq (1-x)(1-y)^{-1}\leq T$ we have $%
|F^{-1}(x)-F^{-1}(y)|\leq \delta $. \ \ The result follows with $\eta =\min
\{\eta _{1},\eta _{2}\}$.
\end{proof}

\begin{lemma}
Let $F$ be a continuous strictly increasing cumulative distribution function
that satisfies both (\ref{l}) and (\ref{m}). \ Then $F^{-1}$ is continuous
and for all $\delta >0$ there exists $T>1$ such that for all $x,y\in (0,1)$
such that $T^{-1}\leq xy^{-1}\leq T$ and $T^{-1}\leq (1-x)(1-y)^{-1}\leq T$
we have $|F^{-1}(x)-F^{-1}(y)|\leq \delta $. In particular, $F^{-1}$ is
uniformly continuous with respect to the metric $\theta _{1}$ (see (\ref{q}%
)).
\end{lemma}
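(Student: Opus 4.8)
The plan is to adapt the argument of the previous lemma (Lemma 4), replacing the hypothesis $(\ref{h})$ by the weaker hypotheses $(\ref{l})$ and $(\ref{m})$, and extracting $T$ at the end rather than being handed it. First I would record the easy consequence of $(\ref{l})$: fix $\varepsilon>0$; there exist a constant $r=r(\varepsilon)\in(0,1)$ and a threshold $t_{0}\in\mathbb{R}$ such that $1-F(t+\varepsilon)\leq r\,(1-F(t))$ for all $t\geq t_{0}$. Iterating this inequality $m$ times gives $1-F(t_{0}+m\varepsilon)\leq r^{m}(1-F(t_{0}))$, i.e. the right tail of $F$ decays at least geometrically along the arithmetic progression $t_{0}+m\varepsilon$. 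Hence, given $\delta>0$, choose $\varepsilon=\delta$, and then pick $m$ so large that $r^{-m}>T$ will fail for the $T$ we are about to choose; concretely, the point is that if $x,y\in(0,\eta)$ lie within ratio $T$ of each other, their $F^{-1}$-images differ by at most $\delta$ provided $T\leq r^{-1}$ (or, to be safe, $T\le r^{-1}$ with $\varepsilon=\delta$ replaced by $\varepsilon=\delta$ — one step of the geometric estimate suffices). Symmetrically, $(\ref{m})$ gives a constant $r'\in(0,1)$ and a threshold for the left tail. Set $T=\min\{r,r'\}^{-1}>1$.

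Next I would turn this into the stated continuity estimate, mimicking Lemma 4. Given $\delta>0$, produce $T>1$ as above together with thresholds $t_{0}$ (right tail) and $t_{0}'$ (left tail), and set $\eta_{1}=1-F(t_{0})$, $\eta_{2}=F(t_{0}')$. For $x<y$ in $(0,\eta_{2})$ with $T^{-1}\le x/y\le T$: writing $s=F^{-1}(x)$, $t=F^{-1}(y)$, so $s<t\le t_{0}'$, the ratio bound $y\le Tx$ reads $F(t)\le T\,F(s)$, and the geometric decay of the \emph{left} tail along steps of size $\delta$ forces $t<s+\delta$, hence $|F^{-1}(x)-F^{-1}(y)|\le\delta$. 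The right-tail case, for $x<y$ in $(1-\eta_{1},1)$ with $T^{-1}\le (1-x)/(1-y)\le T$, is identical after the change of variables $u\mapsto 1-u$. For $x,y$ in the \emph{middle} region $[\,\eta_{2},1-\eta_{1}\,]$ (or whenever one of them is), $F^{-1}$ is continuous on a compact interval hence uniformly continuous there, so shrinking $\delta$ / enlarging $T$ if necessary handles that case; since the hypotheses give ratio control simultaneously at both ends, one simply takes the larger of the two $T$'s and the three regimes patch together. Continuity of $F^{-1}$ on all of $(0,1)$ is immediate from $F$ being continuous and strictly increasing.

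Finally, to get the ``in particular'' clause about uniform continuity with respect to $\theta_{1}$, I would observe that by $(\ref{q})$ with $p=1$, for $0<x\le y<1$,
\[
\theta_{1}(x,y)=\max\Bigl\{\log(x^{-1}y),\ \log\bigl((1-y)^{-1}(1-x)\bigr)\Bigr\},
\]
so $\theta_{1}(x,y)\le\log T$ is exactly the pair of conditions $x/y\ge T^{-1}$ and $(1-x)/(1-y)\ge T^{-1}$ (the reverse inequalities holding automatically since $x\le y$). Thus the estimate just proved says precisely: for every $\delta>0$ there is $\rho=\log T>0$ with $\theta_{1}(x,y)\le\rho\Rightarrow|F^{-1}(x)-F^{-1}(y)|\le\delta$, which is uniform continuity of $F^{-1}:((0,1),\theta_{1})\to\mathbb{R}$.

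The main obstacle I anticipate is the bookkeeping at the junction of the three regimes — left tail, right tail, and compact middle — since the hypotheses $(\ref{l})$–$(\ref{m})$ only control the two tails and one must argue that the ratio conditions, taken \emph{together}, still pin down $F^{-1}(x)$ and $F^{-1}(y)$ to within $\delta$ when one of the points sits in the middle; the resolution is that if, say, $x\ge\eta_{2}$ then the ratio bound $x/y\le T$ confines $y$ to a bounded sub-interval of $(0,1)$ bounded away from $0$, and symmetrically the $(1-x)/(1-y)$ bound keeps it away from $1$, so both points lie in a fixed compact sub-interval where uniform continuity of $F^{-1}$ finishes the job. No single step is deep; the content is entirely in noticing that $\limsup<1$ upgrades to honest geometric tail decay, which is what converts "uniform continuity" (Lemma 4 gave only that, since $(\ref{h})$ is a statement for each fixed $T$) into the present quantitative form with a single $T$ depending only on $\delta$.
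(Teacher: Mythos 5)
Your treatment of the two tails is essentially the paper's argument: hypothesis (\ref{m}) yields $T_{1}>1$ and a threshold $t_{0}$ with $T_{1}F(t)\leq F(t+\delta )$ for $t\leq t_{0}$, and a single step of this estimate shows that any pair $x,y$ in the left tail whose ratio lies in $[T_{1}^{-1},T_{1}]$ satisfies $|F^{-1}(x)-F^{-1}(y)|\leq \delta $; the right tail is symmetric via (\ref{l}). Your reduction of the ``in particular'' clause to the pair of two-sided ratio conditions is also correct. (The digression about iterating the decay $m$ times is unnecessary, as you yourself note: one step suffices.)

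The genuine gap is the middle region, and it is not mere bookkeeping: you write ``enlarging $T$ if necessary'' and ``one simply takes the larger of the two $T$'s,'' but the monotonicity runs the other way. Enlarging $T$ weakens the hypothesis $T^{-1}\leq xy^{-1}\leq T$, $T^{-1}\leq (1-x)(1-y)^{-1}\leq T$, so the assertion for a larger $T$ is strictly stronger and is in general false once $x,y$ may sit in the middle: for instance $x=0.4$, $y=0.6$ satisfy both ratio conditions for every $T\geq 3/2$, yet $F^{-1}(0.6)-F^{-1}(0.4)$ is a fixed positive number that need not be $\leq \delta $. Your fallback in the last paragraph --- both points are confined to a compact subinterval on which $F^{-1}$ is uniformly continuous --- does not close this, because uniform continuity gives nothing unless $|x-y|$ is small, and nothing in your argument forces $|x-y|$ to be small. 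The missing step, which is exactly the paper's choice $T=\min \{T_{1},T_{2},1+\delta ^{\prime }\}$, is to cap $T$ by $1+\delta ^{\prime }$, where $\delta ^{\prime }$ is a modulus of uniform continuity of $F^{-1}$ on the compact middle interval (taken also much smaller than the tail thresholds $\eta _{1},\eta _{2}$ so that the three regimes overlap): then $x/y\leq T$ together with $y<1$ gives $|x-y|\leq T-1\leq \delta ^{\prime }$, and uniform continuity applies. Note the cap is genuinely needed and is not implied by the tails: the density may be very small in the middle while the tails decay fast, so even $\min \{T_{1},T_{2}\}$ alone would not do. With that correction (take the minimum, not the maximum, of the three constraints) your outline becomes the paper's proof.
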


\begin{proof}
Consider any $\delta >0$. \ By (\ref{m}) there exists $T_{1}>1$ and $%
t_{0}\in \mathbb{R}$ such that for all $t<t_{0}$, $T_{1}F(t)\leq F(t+\delta
) $. \ Let $\eta _{1}=\min \{F(t_{0}),2^{-1}\}$. \ As in the proof of the
previous lemma, it follows that for all $x,y\in (0,\eta _{1})$ with $%
T_{1}^{-1}\leq xy^{-1}\leq T_{1}$ we have $|F^{-1}(x)-F^{-1}(y)|\leq \delta $%
. \ Similarly (using (\ref{l})), there exists $T_{2}>1$ and $\eta _{2}\in
(2^{-1},1)$ such that for all $x,y\in (\eta _{2},1)$ with $T_{2}^{-1}\leq
(1-x)(1-y)^{-1}\leq T_{2}$ we have $|F^{-1}(x)-F^{-1}(y)|\leq \delta $. \ By
continuity of $F^{-1}$ relative to the standard topology on $(0,1)$, and by
compactness of $[2^{-1}\eta _{1},1-2^{-1}\eta _{2}]$ there exists $0<\delta
^{\prime }<10^{-1}\min \{\eta _{1},\eta _{2}\}$ such that for all $x,y\in
\lbrack 2^{-1}\eta _{1},1-2^{-1}\eta _{2}]$ with $|x-y|<\delta ^{\prime }$
we have $|F^{-1}(x)-F^{-1}(y)|\leq \delta $. \ We leave it to the reader to
verify that the result holds with%
\[
T=\min \{T_{1},T_{2},1+\delta ^{\prime }\}
\]
\end{proof}

\begin{proof}[Proof of theorem 1]
We shall construct a function $h$ that takes an arbitrary $\delta \in (0,1)$
and produces an appropriate $n_{0}=h(\delta )\in \mathbb{N}$. \ Then, using
this function we shall define the desired sequence $(\delta
_{n})_{1}^{\infty }$ that is mentioned in the statement of the theorem. \ To
this end, let $\delta \in (0,1)$ be given. \ Define%
\begin{equation}
T=10^{6}\delta ^{-2}  \label{i}
\end{equation}%
By lemma 4 there exists $\eta \in (0,1)$ such that if $x,y\in (0,\eta )$ and 
$T^{-1}\leq xy^{-1}\leq T$, or $x,y\in (1-\eta ,1)$ and $T^{-1}\leq
(1-x)(1-y)^{-1}\leq T$, then $|F^{-1}(x)-F^{-1}(y)|\leq \delta $. \ By
compactness, $F^{-1}$ is uniformly continuous on $[\eta /2,1-\eta /2]$,
which implies the existence of $t\in (0,\eta /2)$ such that if $x,y\in $ $%
[\eta /2,1-\eta /2]$ and $|x-y|\leq t$, then $|F^{-1}(x)-F^{-1}(y)|\leq
\delta $. Define%
\begin{equation}
n_{0}=\left\lceil 5t^{-2}\log (4\delta ^{-1})\right\rceil  \label{j}
\end{equation}%
and consider any $n\geq n_{0}$. Let $(\gamma _{(i)})_{1}^{n}$ denote the
order statistics corresponding to an i.i.d. sample from the uniform
distribution on $[0,1]$. \ Note that we have the representation%
\begin{equation}
x_{(i)}=F^{-1}(\gamma _{(i)})  \label{n}
\end{equation}%
valid for all $1\leq i\leq n$. \ By lemmas 1 and 2, as well as equations (%
\ref{i}) and (\ref{j}), with probability at least $1-\delta $ inequalities (%
\ref{e}), (\ref{f}) and (\ref{p}) hold simultaneously for all $1\leq i\leq n$%
. Suppose that these inequalities do indeed hold and consider any fixed $%
1\leq i\leq n$.\ Since $t\leq \eta /2$, one of the three sets $[0,\eta ]$, $%
[\eta /2,1-\eta /2]$ and $[1-\eta ,1]$ contains both $\gamma _{(i)}$ and $%
i(n+1)^{-1}$, which implies that $|F^{-1}(\gamma
_{(i)})-F^{-1}(i(n+1)^{-1})|\leq \delta $, which is inequality (\ref{k}).

Define the non-decreasing sequence $(\kappa _{n})_{1}^{\infty }$ by $\kappa
_{n}=\max \{h(e^{-i}):1\leq i\leq n\}$ and set%
\[
\delta _{n}=\exp (-\max \{i\in \mathbb{N}:\kappa _{i}\leq n\})
\]%
where we define $\max \emptyset =0$. It is clear that $\lim_{n\rightarrow
\infty }\delta _{n}=0$. \ Consider any fixed $n\in \mathbb{N}$. If $\{i\in 
\mathbb{N}:\kappa _{i}\leq n\}=\emptyset $ then the probability bound is
trivial, otherwise let $j=\max \{i\in \mathbb{N}:\kappa _{i}\leq n\}$. \ The
result follows by the inequality $h(\delta _{n})=h(e^{-j})\leq \kappa
_{j}\leq n$ and by definition of the function $h$.
\end{proof}

\begin{proof}[Proof of theorems 2 and 3]
The proof is very similar to that of theorem 1. \ We use the representation (%
\ref{n}). \ The main difference is that we use lemmas 3 and 5 instead of
lemmas 1 and 4. \ The details are left to the reader.
\end{proof}

\bigskip

\section{Log-concave distributions}

The following two lemmas are modifications of lemmas 6 and 9 in \cite{Fr}.

\begin{lemma}
Let $\mu $ be a log-concave probability measure on $\mathbb{R}$ with a
continuous strictly increasing cumulative distribution function $F$. \ Then
there exists $c>0$ such that for all $0<x<y<1$,%
\begin{equation}
|F^{-1}(y)-F^{-1}(x)|\leq c\max \left\{ \left\vert F^{-1}(y)\right\vert 
\frac{\log (x^{-1}y)}{\log y^{-1}},\left\vert F^{-1}(x)\right\vert \frac{%
\log ((1-x)/(1-y))}{\log (1-x)^{-1}}\right\}  \label{o}
\end{equation}
\end{lemma}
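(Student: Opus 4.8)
The plan is to reduce the two‑sided increment of $F^{-1}$ to a pair of one‑variable convexity estimates, one per tail of $\mu$, and to handle a fixed compact middle range separately. First I would invoke the classical fact (elementary on the line, and in the spirit of \cite{Fr}) that log‑concavity of the density of $\mu$ forces both $F$ and $1-F$ to be log‑concave; equivalently $U:=-\log F$ and $W:=-\log(1-F)$ are convex on the support of $\mu$, with $U$ decreasing, $W$ increasing, and both nonnegative. Writing $s=F^{-1}(x)\le t=F^{-1}(y)$ and substituting $x=e^{-U(s)}$, $y=e^{-U(t)}$, $1-x=e^{-W(s)}$, $1-y=e^{-W(t)}$, the target inequality (\ref{o}) becomes $t-s\le c\,\max\{\,|t|(U(s)-U(t))/U(t),\ |s|(W(t)-W(s))/W(s)\,\}$, so the $\log$‑ratios and their normalising denominators have all turned into increments and values of the convex functions $U$ and $W$.

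The heart of the matter is the right tail, i.e.\ $x\le y$ both near $1$. A convex increasing function that tends to $+\infty$, such as $W$, grows at least linearly near the upper end of the support, so $W(t)\ge t/C_0$ for $t$ large; equivalently the concave increasing function $G:=W^{-1}$ satisfies $G(u)\le C_0 u$ for $u$ large, while $G(u/2)\ge 0$ for $u$ large because the median $F^{-1}(1/2)$ is finite. Concavity then gives $u\,G'(u)\le 2(G(u)-G(u/2))\le 2G(u)$. Applying the tangent‑line bound for $G$ at $u_x:=W(s)=-\log(1-x)$, with $u_y:=W(t)\ge u_x$, $t=G(u_y)$ and $s=G(u_x)>0$, I get $t-s=G(u_y)-G(u_x)\le G'(u_x)(u_y-u_x)\le (2G(u_x)/u_x)(u_y-u_x)=2|s|(W(t)-W(s))/W(s)$, i.e.\ the second entry of the maximum, with constant $2$, as soon as $x$ is close enough to $1$. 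The left tail is the mirror image: replace $W,G$ by $U$ and by the convex decreasing $H:=U^{-1}$, obtain $v\,|H'(v)|\le 2|H(v)|$ for $v$ large, and conclude $t-s\le 2|t|(U(s)-U(t))/U(t)$, the first entry of the maximum, whenever $x,y$ are both near $0$.

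For $x,y$ in a fixed compact $[\eta_1,\eta_2]\subset(0,1)$ the tail device fails, and this is the step I expect to demand the most care. There $F^{-1}$ is Lipschitz because the continuous density is bounded below by some $m>0$ on $[F^{-1}(\eta_1),F^{-1}(\eta_2)]$, so $t-s\le(y-x)/m$; on the other hand $\log(y/x)=\int_x^y u^{-1}\,du\ge(y-x)/\eta_2$ with $\log(1/y)\le\log(1/\eta_1)$, and symmetrically for the right‑hand factor, so the maximum in (\ref{o}) is at least a fixed multiple of $\max\{|F^{-1}(x)|,|F^{-1}(y)|\}\,(y-x)$. One then chooses $\eta_1,\eta_2$ so that the two tail arguments already cover $(0,\eta_1)$ and $(\eta_2,1)$ and takes $c$ to be the largest of the constants produced. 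The one genuinely delicate point is that this middle estimate is useful only where $|F^{-1}|$ stays bounded below on $[\eta_1,\eta_2]$, so one wants that interval to avoid a neighbourhood of any zero of $F^{-1}$ (or, harmlessly for the applications, to read $1+|F^{-1}(\cdot)|$ in the middle range). The routine parts are the classical log‑concavity input and the constant bookkeeping; the substance is the two tail estimates, and within them the inequality $u\,G'(u)\le 2G(u)$ obtained by combining linear growth of $W$ with concavity of $G$.
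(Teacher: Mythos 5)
Your two tail estimates are correct and are, in substance, the paper's own argument: everything rests on convexity of $-\log F$ (and of $-\log(1-F)$ for the other tail). Where the paper runs the chord from $(t,-\log F(t))$ to the centroid and uses $F(\mathbb{E}\mu)\ge e^{-1}$ to absorb $-\log F(\mathbb{E}\mu)$ into $\tfrac12\log y^{-1}$, you use the tangent at the nearer endpoint together with the doubling bound $v\,|H'(v)|\le 2\,(H(v/2)-H(v))$; both routes give an absolute constant on the deep tails. (Two minor remarks: the linear lower bound $W(t)\ge t/C_{0}$ is never actually used, only the finiteness of $W(0)$, resp.\ $U(0)$, enters; and your left-tail argument needs $y\le F(0)^{2}$ and your right-tail one $1-x\le(1-F(0))^{2}$, which is available since strict monotonicity of $F$ forces $0<F(0)<1$ --- the analogous restriction $y<F(0)$ is present in the paper's proof as well.)

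The delicate point you flag in the middle range, however, is not something any amount of bookkeeping can fix: inequality (\ref{o}) as stated fails near the zero of $F^{-1}$, i.e.\ near $x_{0}=F(0)$. For the standard normal take $x=\tfrac12$ and $y=\tfrac12+\epsilon$: the left-hand side is $\Phi^{-1}(\tfrac12+\epsilon)\approx\sqrt{2\pi}\,\epsilon$, the second entry of the maximum is $0$, and the first entry is $\Phi^{-1}(\tfrac12+\epsilon)\,\log(1+2\epsilon)/\log\frac{1}{1/2+\epsilon}=O(\epsilon^{2})$, so no finite $c$ works. Consequently your three-piece decomposition cannot cover $(0,1)$: any compact $[\eta_{1},\eta_{2}]$ on which $|F^{-1}|$ is bounded below must omit a neighbourhood of $x_{0}$, and that neighbourhood is reached by neither tail argument. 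Your fallback --- replacing $|F^{-1}(\cdot)|$ by $1+|F^{-1}(\cdot)|$ --- is therefore not merely harmless but necessary; with it your middle-range step does close (the density is bounded below on compacta, and $\log(x^{-1}y)\ge y-x$, $\log\frac{1-x}{1-y}\ge y-x$ there), and the modified inequality is all that Lemmas 7, 8 and Theorem 4 require, since $(\log x^{-1})^{1/p}+(\log(1-x)^{-1})^{1/p}$ is bounded away from $0$ on $(0,1)$. Note that the paper's own proof establishes (\ref{o}) only for $0<x<y<\min\{e^{-2},F(0),F(-2\mathbb{E}\mu)\}$ and refers all remaining cases to ``compactness, continuity and symmetry''; that sentence passes over precisely the degeneracy at $F(0)$, so the obstruction you identified is a defect of the statement itself rather than of your strategy.
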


\begin{proof}
By theorem 5.1 in \cite{LV} (see lemma 5 in \cite{Fr} for a proof) $F$ is
log-concave. \ Hence the function $u(t)=-\log F(t)$ is convex (and strictly
decreasing). \ Let $\mathbb{E}\mu $ denote the centroid of $\mu $ (the
expected value of a random variable with distribution $\mu $). By lemma 5.12
in \cite{LV} (see also lemma 3.3 in \cite{Bo}) $F(\mathbb{E}\mu )\geq e^{-1}$%
, hence $u(\mathbb{E}\mu )\leq 1$. \ By convexity of $u$ we have the
inequality $(t-s)^{-1}(u(t)-u(s))\leq (\mathbb{E}\mu -t)^{-1}(u(\mathbb{E}%
\mu )-u(t))$, which is valid for all $s<t<\mathbb{E}\mu $. \ Let $0<x<y<\min
\{e^{-2},F(0),F(-2\mathbb{E}\mu )\}$ and define $s=F^{-1}(x)$ and $%
t=F^{-1}(y)$. \ Then we have%
\[
F^{-1}(y)-F^{-1}(x)\leq (\mathbb{E}\mu -F^{-1}(y))\frac{\log (x^{-1}y)}{\log
y^{-1}-u(\mathbb{E}\mu )}
\]
It follows from the restrictions on $y$ that $F^{-1}(y)<0$ and that $%
\left\vert F^{-1}(y)\right\vert \geq 2\left\vert \mathbb{E}\mu \right\vert $%
. \ Since $y<F(\mathbb{E}\mu )^{2}$, it follows that $\log y^{-1}>2u(\mathbb{%
E}\mu )$ and (\ref{o}) follows for such $x$ and $y$ with $c=4$. \ For other
values of $x$ and $y$, inequality (\ref{o}) follows by compactness,
continuity and symmetry.
\end{proof}

\begin{lemma}
Let $p\geq 1$ and let $\mu $ be a $p$-log-concave probability measure on $%
\mathbb{R}$ with cumulative distribution function $F$. \ Then there exists $%
c>0$ such that for all $x\in (0,1)$,%
\begin{equation}
|F^{-1}(x)|\leq c\max \{(\log x^{-1})^{1/p},(\log (1-x)^{-1})^{1/p}\}
\label{r}
\end{equation}%
As a consequence of (\ref{r}) and (\ref{o}), $F^{-1}$ is Lipschitz with
respect to the metric $\theta _{p}$ (see (\ref{q})).
\end{lemma}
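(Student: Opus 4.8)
The plan is to prove the tail estimate (\ref{r}) first and then obtain the Lipschitz property by integrating a pointwise sharpening of (\ref{o}). For (\ref{r}), write the density as $f(x)=c\exp(-g(x)^{p})$ with $g\ge 0$ convex. Strict monotonicity of $F$ forces $g$ to be finite (hence continuous) throughout $\mathbb{R}$, and integrability of $f$ forces $g(t)\to +\infty$ in each tail; since $g$ is convex this already forces at least linear growth, i.e.\ there are $\alpha ,\beta >0$ with $g(t)\ge \alpha |t|-\beta $ for $|t|$ large. Therefore $f(t)\le c\exp(-(\alpha |t|/2)^{p})$ for $|t|$ large, and integrating (absorbing the resulting polynomial prefactor into the exponent at the cost of a slightly smaller constant) gives $F(t)\le \exp(-(\alpha |t|/3)^{p})$ as $t\to -\infty$, which is precisely $|F^{-1}(x)|\le 3\alpha ^{-1}(\log x^{-1})^{1/p}$ for small $x$. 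The right tail is identical, and on any compact subinterval of $(0,1)$ the left side of (\ref{r}) is bounded while its right side is bounded below by a positive constant, so (\ref{r}) holds for all $x\in (0,1)$ after enlarging $c$.

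For the Lipschitz property I would use three facts. Since $p\ge 1$, $\mu $ is in particular log-concave, so (as in the proof of the preceding lemma) $F$ is log-concave; thus $u=-\log F$ is convex and decreasing with $u(\mathbb{E}\mu )\le 1$. Also $f>0$ everywhere, so $F^{-1}$ is $C^{1}$ with $(F^{-1})'(t)=1/f(F^{-1}(t))$. Convexity of $u$ gives, for $s=F^{-1}(t)<\mathbb{E}\mu $, the bound $u'(s)\le (u(\mathbb{E}\mu )-u(s))/(\mathbb{E}\mu -s)$, equivalently $|u'(s)|\ge (u(s)-1)/(\mathbb{E}\mu -s)=(\log t^{-1}-1)/(\mathbb{E}\mu -F^{-1}(t))$; since $|u'(s)|=f(F^{-1}(t))/t$ this rearranges to $(F^{-1})'(t)\le (\mathbb{E}\mu -F^{-1}(t))/\big(t(\log t^{-1}-1)\big)$. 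Feeding in (\ref{r}), which gives $\mathbb{E}\mu -F^{-1}(t)\le C(\log t^{-1})^{1/p}$ for small $t$, produces the crucial pointwise bound $(F^{-1})'(t)\le 2C(\log t^{-1})^{1/p-1}/t$ for small $t$; by symmetry (the same argument for the reflected measure), $(F^{-1})'(t)\le 2C(\log (1-t)^{-1})^{1/p-1}/(1-t)$ near $1$. Since $(F^{-1})'$ is bounded on compact subintervals of $(0,1)$, after enlarging the constant we obtain, for all $t\in (0,1)$,
\[
(F^{-1})'(t)\ \le\ C_{1}\left[\frac{(\log t^{-1})^{1/p-1}}{t}+\frac{(\log (1-t)^{-1})^{1/p-1}}{1-t}\right].
\]

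It then remains to integrate. For $0<x\le y<1$ one has $F^{-1}(y)-F^{-1}(x)=\int_{x}^{y}(F^{-1})'(t)\,dt$; applying the displayed bound and substituting $\ell =\log t^{-1}$ in the first integral and $m=\log (1-t)^{-1}$ in the second converts them into $C_{1}p\big[(\log x^{-1})^{1/p}-(\log y^{-1})^{1/p}\big]$ and $C_{1}p\big[(\log (1-y)^{-1})^{1/p}-(\log (1-x)^{-1})^{1/p}\big]$. The elementary inequality $a^{1/p}-b^{1/p}\le (a-b)\,a^{1/p-1}$, valid for $0\le b\le a$ and $p\ge 1$ (it reduces to $a^{1/p-1}\le b^{1/p-1}$), applied to each bracket with $a$ the larger of the two logarithms, turns these into exactly $C_{1}p$ times the two quantities appearing inside the maximum defining $\theta _{p}$ in (\ref{q}). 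Hence $F^{-1}(y)-F^{-1}(x)\le 2C_{1}p\,\theta _{p}(x,y)$, and since $F$ is increasing the left side is $|F^{-1}(y)-F^{-1}(x)|$; so $F^{-1}$ is Lipschitz with respect to $\theta _{p}$.

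The step I expect to be the genuine obstacle is the pointwise estimate on $(F^{-1})'$. Inequality (\ref{o}) is only a single-tangent bound, and differentiating it yields merely $(F^{-1})'(t)\le c|F^{-1}(t)|/t$, which after integration against (\ref{r}) is off by a factor $\log t^{-1}$ in each tail and does not suffice. Recovering that missing factor is exactly why one must retain the term $u(s)=\log t^{-1}$ in the convexity inequality $|u'(s)|\ge (u(s)-1)/(\mathbb{E}\mu -s)$ rather than discard it as in the derivation of (\ref{o}). The remaining points — matching the integrated terms with the precise shape of $\theta _{p}$, and handling the central region and the right tail by compactness and reflection — are routine.
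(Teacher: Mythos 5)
Your argument is correct, but the second half takes a genuinely different route from the paper, and your stated reason for abandoning the paper's route rests on a misreading of (\ref{o}). For (\ref{r}) you essentially reprove the tail estimate that the paper imports from lemma 9 of \cite{Fr} (convexity and integrability force $g(t)\geq\alpha|t|-\beta$, hence $F(t)\leq\exp(-(\alpha|t|/3)^{p})$ in the tail), so that part matches the paper in substance. For the Lipschitz claim, the paper really does mean that (\ref{o}) and (\ref{r}) suffice, via a short case split rather than differentiation: in the left tail, if $x\geq y^{2}$ then $\log x^{-1}\leq 2\log y^{-1}$, so the first term of (\ref{o}) combined with $|F^{-1}(y)|\leq c(\log y^{-1})^{1/p}$ is at most a constant times $\log(x^{-1}y)/(\log x^{-1})^{1-1/p}$; if instead $x<y^{2}$ then $\log(x^{-1}y)\geq\tfrac12\log x^{-1}$, and (\ref{r}) alone gives $|F^{-1}(y)-F^{-1}(x)|\leq|F^{-1}(x)|+|F^{-1}(y)|\leq C\,\theta_{p}(x,y)$; the right tail is symmetric and the middle region is compactness. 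In particular, your remark that differentiating (\ref{o}) ``yields merely $(F^{-1})'(t)\leq c|F^{-1}(t)|/t$'' drops the denominator $\log y^{-1}$ that is present in the first term of (\ref{o}); retaining it, the infinitesimal form of (\ref{o}) is $c|F^{-1}(t)|/(t\log t^{-1})$, which is exactly the pointwise bound you rederive from convexity of $u=-\log F$. Your alternative derivation itself is sound: the bound $(F^{-1})'(t)\leq C(\log t^{-1})^{1/p-1}/t$ near $0$, its mirror image near $1$, integration, and the elementary inequality $a^{1/p}-b^{1/p}\leq(a-b)a^{1/p-1}$ do produce exactly the two terms of $\theta_{p}$, and the extra regularity you invoke ($f>0$ everywhere, $F^{-1}\in C^{1}$) is available because $g$ is finite and convex in the strictly increasing setting where the lemma is used. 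What your route buys is that the matching with $\theta_{p}$ becomes mechanical (no case analysis); what it costs is the differentiability discussion and the (incorrect) claim that the paper's stated deduction fails, which you should delete or replace by the case split above.
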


\begin{proof}
By lemma 9 in \cite{Fr} (which holds for $p\geq 1$) there exists $%
c_{1},c_{2}>0$ and $t_{0}>1$ such that for all $t<-t_{0}$, $F(t)\leq
c_{1}|t|^{1-p}\exp (-c_{2}|t|^{p})$. \ Let $\eta _{1}=\min
\{F(-t_{0}),c_{1}^{-1}\}$ and consider any $x\in (0,\eta _{1})$. \ Let $%
t=F^{-1}(x)$. \ Hence $x=F(t)\leq c_{1}|t|^{1-p}\exp (-c_{2}|t|^{p})$, which
implies that%
\begin{eqnarray*}
|F^{-1}(x)| &=&-t \\
&\leq &(c_{2}^{-1}(\log c_{1}+\log x^{-1}))^{1/p} \\
&\leq &2^{1/p}c_{2}^{-1/p}(\log x^{-1})^{1/p}
\end{eqnarray*}%
The result now follows by symmetry, compactness and continuity.
\end{proof}

\begin{lemma}
Let $F$ be a continuous strictly increasing cumulative distribution function
associated to a log-concave probability measure. \ Then there exists $c>0$
such that for all $\varepsilon \in (0,1/2)$ and all $x,y\in \lbrack
\varepsilon ,1-\varepsilon ]$,%
\[
|F^{-1}(x)-F^{-1}(y)|\leq c\varepsilon ^{-1}|x-y|
\]
\end{lemma}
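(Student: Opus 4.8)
The plan is to bound $|F^{-1}(x)-F^{-1}(y)|$ by controlling the derivative of $F^{-1}$ on the interval $[\varepsilon,1-\varepsilon]$ and then invoke the mean value theorem. Write $f = F'$ for the density of $\mu$, which exists and is log-concave. The key point is that $(F^{-1})'(u) = 1/f(F^{-1}(u))$, so it suffices to show that $f(F^{-1}(u)) \geq c^{-1}\varepsilon$ for all $u \in [\varepsilon,1-\varepsilon]$, with $c$ independent of $\varepsilon$. Equivalently, setting $t = F^{-1}(u)$, I must show that $f(t) \geq c^{-1}\min\{F(t), 1-F(t)\}$ for all $t$ in the support of $\mu$; this is a standard fact about log-concave densities, and the bound on $|F^{-1}(x)-F^{-1}(y)|$ then follows by integrating $1/f(F^{-1}(u))$ over $[x,y]$ (or directly from the mean value theorem applied to $F^{-1}$, using that $F^{-1}$ is differentiable since $F$ is strictly increasing and $f$ is continuous on the interior of the support).

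To prove $f(t) \geq c^{-1}\min\{F(t),1-F(t)\}$, I would treat the left tail (the case $t \leq \mathbb{E}\mu$, say, where $\min = F(t)$) and the right tail symmetrically. On the left, since $f$ is log-concave, $\log f$ is concave, so for $s < t \leq \mathbb{E}\mu$ the density is (up to the usual monotonicity considerations) at least exponentially increasing on the way up to its mode, and in particular $F(t) = \int_{-\infty}^t f \leq f(t)\cdot(\text{something proportional to }1/(\text{slope of }\log f))$. More concretely: if $\log f$ has slope $\lambda > 0$ at $t$, then $f(s) \leq f(t)e^{\lambda(s-t)}$ for $s \leq t$ by concavity, whence $F(t) \leq f(t)/\lambda$; and one checks using $F(\mathbb{E}\mu) \geq e^{-1}$ (lemma 5.12 in \cite{LV}, as already used in the proof of lemma 6) together with convexity of $-\log F$ that $\lambda$ is bounded below by an absolute constant once $t$ is, say, to the left of $\mathbb{E}\mu - 1$. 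The region near the mode, i.e. $t \in [\mathbb{E}\mu - 1, \mathbb{E}\mu + 1]$, is handled by a compactness argument: $f$ is continuous and strictly positive there (it is positive on the interior of the support and the support contains a neighborhood of $\mathbb{E}\mu$), so $f$ is bounded below by a positive constant on that compact interval, while $\min\{F(t),1-F(t)\} \leq 1/2$ there, giving the desired comparison. Rescaling and translating reduces to a normalized situation so that the constant $c$ is genuinely universal.

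The main obstacle is the bookkeeping around the mode and the matching of the three regimes (far left tail, central region, far right tail) into a single clean constant $c$; the tail estimates themselves are routine log-concavity manipulations of the kind already performed in lemmas 6 and 8. Once $f(F^{-1}(u)) \geq c^{-1}\min\{u,1-u\} \geq c^{-1}\varepsilon$ is established for $u \in [\varepsilon,1-\varepsilon]$, the conclusion is immediate: for $x < y$ in $[\varepsilon,1-\varepsilon]$,
\[
|F^{-1}(y)-F^{-1}(x)| = \int_x^y \frac{du}{f(F^{-1}(u))} \leq c\int_x^y \frac{du}{\min\{u,1-u\}} \leq c\varepsilon^{-1}|x-y|,
\]
which is the claimed inequality (absorbing constants).
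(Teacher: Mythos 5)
Your route is genuinely different from the paper's. The paper deduces this lemma in one line from lemmas 6 and 7 with $p=1$ together with $\log t\leq t-1$, i.e. it recycles tail estimates already established; you instead work directly with the density via $(F^{-1})'(u)=1/f(F^{-1}(u))$ and the pointwise bound $f(F^{-1}(u))\geq c^{-1}\min \{u,1-u\}$. That bound is indeed a true, standard fact for log-concave laws, your final integration step from it is correct, and the approach is self-contained and even produces an explicit constant (one may take $c=1/f(\mathbb{E}\mu )$, or $1/(2f(m))$ with $m$ the median, via concavity of $u\mapsto f(F^{-1}(u))$). So the strategy is sound; what the paper's proof buys is brevity given its earlier machinery, while yours buys a direct, quantitative Lipschitz estimate independent of lemmas 6 and 7.

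There is, however, a flawed step in your justification of the key bound. You split the left half into a far tail $t\leq \mathbb{E}\mu -1$, handled via a positive slope $\lambda $ of $\log f$ at $t$, and a compact central interval $[\mathbb{E}\mu -1,\mathbb{E}\mu +1]$. First, the supergradient of $\log f$ need not be positive on all of $(-\infty ,\mathbb{E}\mu -1]$: for a skewed log-concave density (say rising steeply to a mode at $-10$ and decaying slowly to the right so that the mean is $0$) the slope of $\log f$ is negative on $[-10,\mathbb{E}\mu -1]$, your inequality $F(t)\leq f(t)/\lambda $ says nothing there, and that region is not covered by your compact interval either. Second, no constant here can be absolute or ``genuinely universal'': pushing $\mu $ forward under $x\mapsto \sigma x$ multiplies the Lipschitz constant by $\sigma $, so $c$ must depend on $\mu $ (which the statement allows), and the normalization you allude to is never specified; also, convexity of $-\log F$ controls $f/F$, not the slope of $\log f$, so the mechanism you invoke does not bound $\lambda $. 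Both defects are repaired by the very ingredient you name: log-concavity of $F$ (theorem 5.1 in \cite{LV}, already used in lemma 6) makes $f/F$ non-increasing, hence $f(t)\geq f(\mathbb{E}\mu )F(t)$ for all $t\leq \mathbb{E}\mu $, and symmetrically log-concavity of $1-F$ gives $f(t)\geq f(\mathbb{E}\mu )(1-F(t))$ for all $t\geq \mathbb{E}\mu $; since $e^{-1}\leq F(\mathbb{E}\mu )\leq 1-e^{-1}$ puts $\mathbb{E}\mu $ in the interior of the support, $f(\mathbb{E}\mu )>0$, and one gets $f(F^{-1}(u))\geq f(\mathbb{E}\mu )\min \{u,1-u\}$ for all $u\in (0,1)$ with no case analysis near the mode, after which your final display completes the proof.
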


\begin{proof}
This follows from lemmas 6 and 7 with $p=1$ and the inequality $\log t\leq
t-1$.
\end{proof}

\begin{proof}[Proof of theorem 4]
By lemmas 1, 6 and 7, with probability at least $1-400(\log n)^{-q}$, for
all $i\leq n^{3/4}$ and all $i\geq n-n^{3/4}$ we have 
\[
|x_{(i)}-x_{(i)}^{\ast }|\leq c\frac{\log \log n}{(\log n)^{1-1/p}}
\]

Let $I=[2^{-1}n^{-1/4},1-2^{-1}n^{-1/4}]$. \ By lemma 8, for all $x,y\in I$
we have%
\[
|F^{-1}(x)-F^{-1}(y)|\leq cn^{1/4}|x-y|
\]

By lemma 2, with probability at least $1-2\exp (-5n^{1/4})$, for all $1\leq
i\leq n$ we have%
\[
|\gamma _{(i)}-i(n+1)^{-1}|\leq n^{-3/8}
\]

Hence for all $n^{3/4}\leq i\leq n-n^{3/4}$ both $\gamma _{(i)}$ and $%
i(n+1)^{-1}$ are elements of $I$ and the result follows.
\end{proof}

\bigskip

\end{document}